\documentclass[pdflatex,sn-mathphys-num]{sn-jnl}% Math and Physical Sciences Numbered Reference Style
\usepackage{graphicx}%
\usepackage{multirow}%
\usepackage{amsmath,amssymb,amsfonts}%
\usepackage{amsthm}%
\usepackage{thmtools}
\usepackage{thm-restate}
\usepackage{mathrsfs}%
\usepackage[title]{appendix}%
\usepackage{xcolor}%
\usepackage{textcomp}%
\usepackage{manyfoot}%
\usepackage{booktabs}%
\usepackage{algorithm}%
\usepackage{algorithmicx}%
\usepackage{algpseudocode}%
\usepackage{listings}%
\usepackage[T1]{fontenc}
\usepackage{amsmath, amssymb, stmaryrd, calligra}
\usepackage[english]{babel}
\usepackage{mathrsfs}
\usepackage[all]{xy}
\usepackage{enumitem}
\usepackage{url}
\usepackage{todonotes}
\usepackage{tikz-cd}

\newtheorem{thm}{Theorem}[section]
\newtheorem*{thm*}{Theorem}
\newtheorem*{cor*}{Corollary}
\newtheorem*{lem*}{Lemma}
\newtheorem{lem}[thm]{Lemma}

\newtheorem{prop}[thm]{Proposition}
\newtheorem*{prop*}{Proposition}

\theoremstyle{definition}
\newtheorem{defn}[thm]{Definition}

\newtheorem{remark}[thm]{Remark}

\newcommand{\rest}[1]{|_{\thinspace #1}}
\newcommand{\Spec}{\text{Spec }}

\newcommand{\BP}{\mathbb{P}}

\DeclareMathOperator{\shom}{\mathscr{H}\text{\kern -3pt {\calligra\large om}}\,}

\newcommand\secref[1]{\hyperref[#1]{\S\ref*{#1}}}

\def \Hom{\operatorname{Hom}}

\theoremstyle{thmstyleone}%
\theoremstyle{thmstyletwo}%

\theoremstyle{thmstylethree}%

\begin{document}

\title[Article Title]{The Second Vanishing Theorem in Ramified Mixed Characteristic}

%%=============================================================%%
%% GivenName	-> \fnm{Joergen W.}
%% Particle	-> \spfx{van der} -> surname prefix
%% FamilyName	-> \sur{Ploeg}
%% Suffix	-> \sfx{IV}
%% \author*[1,2]{\fnm{Joergen W.} \spfx{van der} \sur{Ploeg} 
%%  \sfx{IV}}\email{iauthor@gmail.com}
%%=============================================================%%

\author*{\fnm{Alex} \sur{Scheffelin}}\email{ams2637@columbia.edu}

\affil{\orgdiv{Department of Mathematics}, \orgname{Columbia University}, \orgaddress{\street{Room 408, MC 4406
2990 Broadway}, \city{New York}, \postcode{10027}, \state{NY}, \country{United States of America}}}

%%==================================%%
%% Sample for unstructured abstract %%
%%==================================%%

\abstract{Local Cohomology, since its introduction, has served as an important invariant for commutative rings and their modules. They furthermore provide the local model for relative cohomology groups for schemes. As with all cohomology theories, vanishing theorems are widely sought after, and for local cohomology a classical theorem of Grothendieck states that all local cohomology vanishes past the dimension of the ring. Hartshorne-Lichtenbaum vanishing tells us when local cohomology vanishes at the dimension of the ring, and for vanishing one below the dimension of the ring we arrive at the Second Vanishing Theorem. This paper proves the Second Vanishing Theorem in the final unknown case for regular local rings, that being the case of ramified mixed characteristic rings, and gives a few applications of this result. The method of this paper works in equicharacteristic, and we show how we can reduce the unramified case to the ramified case as well, yielding a unified proof of the Second Vanishing Theorem in all characteristics.}

\keywords{Second Vanishing Theorem, Local Cohomology, Formal Schemes}

%%\pacs[JEL Classification]{D8, H51}

\pacs[2020 Mathematics Subject Classification]{13D45 (Primary), 13H05, 14B15, 14F17 (Secondary)}

\maketitle
\bmhead{Acknowledgements}

We would like to thank many people for various discussions and feedback. Included among these are Vidhu Adhihetty, Marty Bishop, Jessie Loucks-Tavitas, Brendan Murphy, S\'andor Kov\'acs, Karl Schwede, Shinnosuke Ishiro, Thomas Lawrence, Ari Krishna, Jananan Arulseelan, Linquan Ma, and Jon Kim. Special thanks to Brian Nugent for pointing us to the result of \cite{lipmanRationalSingularitiesApplications1969} which plays a crucial role in the proof of \S6. This work was completed while studying at the University of Tokyo under the care and supervision of Kazuma Shimomoto and Shunsuke Takagi, whom the author thanks. Finally, we would like to give our utmost thanks to our advisor Johan de Jong, without whom this paper would certainly have been impossible.
\section{Introduction}
Given a ring $A$ and ideal $I$ we define the cohomological dimension of $A$ with respect to $I$ as $\mathrm{cd}(A,I) = \sup\{i|\exists M\in\text{Mod}(A),H^i_I(M)\neq 0\}$, and similarly for a scheme $X$ we define the cohomological dimension of $X,\ \mathrm{cd}(X) = \sup\{i|\exists\mathscr{F}\in\mathrm{QCoh}(X), H^i(X,\mathscr{F})\neq0\}$.

Given an open set $U\subseteq \BP^n_A$, writing $U^c = V(I)$ for a homogeneous ideal $I\subseteq A[x_0,\dots,x_n]$, for any quasicoherent $\mathcal{O}_U$-module $\widetilde{M}$ we obtain an exact sequence
\[0\to H^0_I(M)\to M\to H^0(U,\widetilde{M})\to H^1_I(M)\to 0,\]
as well as isomorphisms
\[\bigoplus^\nu H^i(U,\widetilde{M}(\nu))\cong H^{i+1}_I(M)\] where $M$ is the associated $A[x_0,\dots,x_n]$-module.

Via the isomorphism described above we see that
\[\mathrm{cd}(A[x_0,\dots,x_n],I) \leq \dim(A[x_0,\dots,x_n]) - c \Leftrightarrow \mathrm{cd}(U)\leq \dim(\BP^n_A) - c\]
whenever $r < \dim(A) + n$. Motivated by this, a natural question is to ask when $\mathrm{cd}(R,I) \leq \dim(R) - c$ for various values of $c$.

When $c = 1$ this is the content of the Hartshorne-Lichtenbaum vanishing theorem \cite[Theorem 3.1]{hartshorneCohomologicalDimensionAlgebraic1968}, which states that
\begin{thm}
    Given an $n$-dimensional Noetherian ring $A$ with an ideal $I$, the following are equivalent.
    \begin{enumerate}
        \item $\mathrm{cd}(A,I) \leq n - 1$
        \item For all maximal ideals $\mathfrak{m}$ of $A$, either $\dim A_\mathfrak{m} < n$, or
        \[\dim \widehat{A}_\mathfrak{m}/(I\widehat{A}_\mathfrak{m} + \mathfrak{p})\geq 1\]
        for all minimal primes $\mathfrak{p}$ of $\widehat{A}$ where $\dim \widehat{A}/\mathfrak{p} = n$.
    \end{enumerate}
\end{thm}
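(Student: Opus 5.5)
The plan is to reduce to the complete local case and there combine Grothendieck's non-vanishing theorem with Matlis duality. The key inputs are:
\begin{itemize}
\item for a Noetherian ring of dimension $n$, $H^i_I(M)=0$ for $i>n$;
\item $H^n_I(-)$ is right exact, so $H^n_I(M)\cong M\otimes_A H^n_I(A)$ and $\mathrm{cd}(A,I)\le n-1$ iff $H^n_I(A)=0$;
\item local cohomology commutes with flat base change and localization, so $H^n_I(A)_\mathfrak{m}=H^n_{IA_\mathfrak{m}}(A_\mathfrak{m})$.
\end{itemize}

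\textbf{Reduction to the complete local case.} $H^n_I(A)=0$ iff $H^n_{IA_\mathfrak{m}}(A_\mathfrak{m})=0$ for every maximal ideal $\mathfrak{m}$. If $\dim A_\mathfrak{m}<n$, this holds by Grothendieck vanishing. Otherwise, faithful flatness of $A_\mathfrak{m}\to\widehat{A}_\mathfrak{m}$ reduces us to a complete local ring $(R,\mathfrak{m})$ of dimension $n$ with ideal $J=I\widehat A_\mathfrak{m}$.

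We must then show: $H^n_J(R)=0$ iff $\dim R/(J+\mathfrak{p})\ge1$ for every minimal prime $\mathfrak{p}$ with $\dim R/\mathfrak{p}=n$.

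\textbf{Further reduction to complete local domains.} Let $\mathfrak{a}$ be the intersection of the primary components of $0$ belonging to primes $\mathfrak{q}$ with $\dim R/\mathfrak{q}<n$. Then:
\begin{itemize}
\item there is an exact sequence $0\to \mathfrak{a}\to R\to R/\mathfrak{a}\to0$;
\item every module of the form $\mathfrak{a}$, or more generally any $R$-module supported on $V(\mathfrak{q})$ with $\dim R/\mathfrak{q}<n$, has vanishing $H^n_J$ by Grothendieck's bound applied over $R/\mathrm{Ann}$;
\item so $H^n_J(R)\cong H^n_J(R/\mathfrak{a})$.
\end{itemize}
Next, filter $R/\mathfrak{a}$ by modules whose graded pieces are $R/\mathfrak{p}$ with $\mathfrak{p}$ of dimension $n$, plus pieces of lower dimension. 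Right exactness of $H^n_J$ then gives: $H^n_J(R)=0$ iff $H^n_{J}(R/\mathfrak{p})=0$ for every minimal $\mathfrak{p}$ with $\dim R/\mathfrak{p}=n$. For the forward direction, use the surjection $H^n_J(R)\to H^n_J(R/\mathfrak{p})$. For the converse, use the filtration together with right exactness.

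So we may assume $R$ is a complete local domain of dimension $n$, and must show $H^n_J(R)=0$ iff $\dim R/J\ge1$, i.e.\ iff $J$ is not $\mathfrak{m}$-primary.

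\textbf{The domain case.} If $\sqrt J=\mathfrak{m}$, then $H^n_J(R)=H^n_\mathfrak{m}(R)\ne0$ by Grothendieck's non-vanishing theorem.

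The converse is the main obstacle. Assume $\dim R/J\ge1$. Following Hartshorne--Lichtenbaum, the approach is:
\begin{enumerate}
\item By Cohen's structure theorem, $R$ is module-finite over a complete regular local ring. This lets us replace $J$ by a prime $\mathfrak{q}\supseteq J$ with $\dim R/\mathfrak{q}=1$; we use that $\mathrm{cd}$ only drops on enlarging $\sqrt J$ in the sense needed, via the Mayer--Vietoris sequence and induction on the number of minimal primes of $J$.
\item With $\dim R/\mathfrak{q}=1$, choose $x\in\mathfrak{m}\setminus\mathfrak{q}$, so that $\sqrt{\mathfrak{q}+xR}=\mathfrak{m}$.
\item The Mayer--Vietoris sequence (or the sequence for $\mathfrak{q}$ and $x$) gives
\[H^{n-1}_\mathfrak{q}(R)_x\to H^n_\mathfrak{m}(R)\to H^n_\mathfrak{q}(R)\to H^n_\mathfrak{q}(R)_x.\]
The last term vanishes because $R_x$ has dimension $<n$ along $V(\mathfrak{q})$.
\item It therefore suffices to show that $H^n_\mathfrak{m}(R)\to H^n_\mathfrak{q}(R)$ is zero, equivalently that the connecting map is surjective.
\item By Matlis duality, $H^n_\mathfrak{m}(R)$ is dual to the canonical module $\omega_R$. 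A nonzero map out of it would yield a nonzero element of $\mathrm{Hom}(H^n_\mathfrak{q}(R),E)$. Using the $x$-divisibility of $H^n_\mathfrak{q}(R)$ together with completeness, this Hom is an $\mathfrak{m}$-adically separated module on which $x$ acts injectively in a compatible way, which forces it to vanish.
\end{enumerate}
The completeness (domain-ness of $\widehat R$ rather than of $R$) is exactly what is used to make this duality argument work, and I expect it to be the delicate step.
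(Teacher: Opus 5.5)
The paper gives no proof of this statement; it quotes it from \cite[Theorem 3.1]{hartshorneCohomologicalDimensionAlgebraic1968}, so your outline has to stand on its own. Your reductions (modulo the two slips below) and steps 2--4 are the standard ones and are fine. Step 5, however, is the whole content of the theorem, and the argument you give there does not work.

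Every property you invoke holds equally for $H^n_\mathfrak m(R)$. Since $R$ is a domain, $H^n_\mathfrak m(R)/xH^n_\mathfrak m(R)\cong H^n_\mathfrak m(R/xR)=0$. So $H^n_\mathfrak m(R)$ is Artinian and $x$-divisible, and its Matlis dual $\omega_R\neq 0$ is finitely generated, $\mathfrak m$-adically separated, and has $x$ as a nonzerodivisor. These properties therefore cannot force $D=\mathrm{Hom}_R(H^n_\mathfrak q(R),E)$ to vanish.

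Your argument also never uses that $R$ is a domain, and it must. Take $R=k\llbracket a,b,c,d\rrbracket/((a,b)\cap(c,d))$, $\mathfrak q=(a,b,c)$, and the element $a+d$ in place of $x$. Everything you list holds, yet $H^2_\mathfrak q(R)\twoheadrightarrow H^2_{(a,b)}(k\llbracket a,b\rrbracket)\neq 0$.

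What is missing is an actual computation of $D$ that exploits the fact that $H^n_\mathfrak q(R)$ is $\mathfrak q$-torsion. For example, suppose $R$ is Gorenstein. Local duality turns $H^n_\mathfrak q(R)=\varinjlim_k \mathrm{Ext}^n_R(R/\mathfrak q^k,R)$ into $D\cong\varprojlim_k H^0_\mathfrak m(R/\mathfrak q^k)$. Since $R$ is $\mathfrak q$-adically complete, this embeds in $\varprojlim_k R/\mathfrak q^k=R$, with image inside $\bigcap_k(\mathfrak q^kR_\mathfrak q\cap R)=\ker(R\to R_\mathfrak q)$. That kernel is $0$ precisely because $R$ is a domain, by Krull's intersection theorem in $R_\mathfrak q$. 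For a general complete domain, you can run the same computation over an $n$-dimensional complete intersection $G\twoheadrightarrow R$ (which exists by Cohen's structure theorem). There you use that $\ker(G\to R)$ is a minimal prime and that every associated prime of $G$ is minimal. Alternatively, follow Hartshorne's formal-duality argument.

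Two smaller repairs. First, as you define it, $\mathfrak a$ is the intersection of the primary components belonging to primes of dimension $<n$. Then $\dim R/\mathfrak a<n$, and your isomorphism would give $H^n_J(R)=0$ always. You mean the components belonging to the $n$-dimensional primes, though the prime-filtration argument alone already carries out this reduction.

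Second, the justification in step 1 is wrong. $\mathrm{cd}$ is not monotone in the ideal, since $\mathrm{cd}(R,0)=0<n=\mathrm{cd}(R,\mathfrak m)$, and neither Cohen's theorem nor Mayer--Vietoris is needed. What you need is a surjection $H^n_{J'}(R)\twoheadrightarrow H^n_J(R)$ for $J\subseteq J'\subseteq\mathfrak m$. You get it by adjoining generators $y$ one at a time in the exact sequence $H^n_{\mathfrak b+yR}(R)\to H^n_{\mathfrak b}(R)\to H^n_{\mathfrak b}(R_y)=0$, where the last term vanishes because $\dim R_y<n$. This is the same sequence you use in step 3. Vanishing for a prime $\mathfrak q\supseteq J$ with $\dim R/\mathfrak q=1$ then gives vanishing for $J$.
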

To extend this to $c = 2$, we focus our attention on regular local rings, in which case Hartshorne later poses a problem.
\begin{defn}
    Let $A$ be a regular local ring, and $I$ an ideal of $A$. Denote by $A^{\dagger}$ the ring $((\widehat{A})^{\mathrm{sh}})^{\widehat{}}$ which is the completion of the strict henselization of the completion of $A$. Then we say that the Second Vanishing Theorem holds for the pair $(A,I)$ if the following are equivalent:
    \begin{enumerate}
        \item $\mathrm{cd}(A,I)\leq \dim A - 2$
        \item $\dim A/I\geq 2$ and $\Spec(A^{\dagger}/IA^{\dagger})\setminus\{\mathfrak{m}\}$ is connected
    \end{enumerate}
\end{defn}
Progress on the Second Vanishing Theorem has proceeded steadily. After being posed in 1968, Ogus proved it in characteristic 0 \cite[Corollary 2.11]{ogusLocalCohomologicalDimension1973}, while Peskine and Szpiro proved it in positive characteristic \cite[Corollaire III 5.5]{peskineDimensionProjectiveFinie1973}. Later Huneke and Lyubeznik proved it in equicharacteristic ambivalent to the characteristic \cite{hunekeVanishingLocalCohomology1990}. Recently, Zhang proved it in unramified mixed characteristic \cite{zhangSecondVanishingTheorem2025}. In ramified mixed characterstic results are limited, Bhattacharya proved it when $I$ is extended from an unramified subring which $A$ is finite over \cite{bhattacharyyaNoteSecondVanishing2020}, and Asgharazadeh-Ishiro-Shimomoto proved it subject to additional conditions \cite{asgharzadehSurjectivityLocalCohomology2023}.

Recently Linquan Ma created an example of a complete regular local ring $R$ and an ideal $J$ such that $H^6_J(R)$ has infinitely many associated primes \cite{maInfinitelyManyAssociated2026}. This was a surprising result, as it was conjectured that for all regular local rings $A$, ideals $I$, and natural numbers $i$, that $H^i_I(A)$ has finitely many associated primes. This was also known in every case except for the ramified case, and so it seemed to suggest that local cohomological results for ramified regular local rings behave differently than in all other cases, which may have cast the validity of the Second Vanishing Theorem for ramified regular rings in question.

The main result of our paper is the following:
\begin{restatable}{thm}{maintheorem}\label{maintheorem}
    Let $A$ be a ramified regular local ring of mixed characteristic, and $I$ an ideal of $A$. Then SVT$(A,I)$ holds.
\end{restatable}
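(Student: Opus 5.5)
We will follow the classical strategy of Ogus and Huneke--Lyubeznik. Vanishing of $H^{d}_I$ and $H^{d-1}_I$, with $d=\dim A$, is translated into a statement about the punctured spectrum of $A/I$. That statement is then proved by formal-geometric (de Rham/Hartshorne-type) arguments, which do not depend on the characteristic.

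\emph{Reductions.} Local cohomology commutes with flat base change, and $A\to A^{\dagger}$ is faithfully flat with $A^{\dagger}$ again regular. So $\mathrm{cd}(A,I)=\mathrm{cd}(A^{\dagger},IA^{\dagger})$, and we may assume $A$ is complete with separably closed residue field. We may also replace $I$ by its radical.

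The implication (1)$\Rightarrow$(2) is the standard one. If $\dim A/I\le 1$, then $\mathrm{cd}\ge d-1$ by the Grothendieck nonvanishing theorem applied to the height of $I$. If the punctured spectrum of $A/I$ is disconnected, write $\sqrt I = J\cap J'$ with $J+J'$ being $\mathfrak m$-primary. The Mayer--Vietoris sequence
\[H^{d-1}_J(A)\oplus H^{d-1}_{J'}(A)\to H^{d-1}_I(A)\to H^d_{\mathfrak m}(A)\to H^d_J(A)\oplus H^d_{J'}(A)\]
together with Hartshorne--Lichtenbaum (which gives $H^d_J=H^d_{J'}=0$) shows $H^{d-1}_I(A)\neq 0$. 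This direction is characteristic free.

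\emph{The converse.} Assume $\dim A/I\ge 2$ and that $\Spec(A/I)\setminus\{\mathfrak m\}$ is connected. By Hartshorne--Lichtenbaum, $H^d_I(A)=0$, so it suffices to show $H^{d-1}_I(A)=0$. By the usual Matlis duality and the local duality argument (Ogus, Huneke--Lyubeznik), this is equivalent to the vanishing of a formal cohomology group. Let $\mathfrak X$ be the formal completion of $U=\Spec A\setminus\{\mathfrak m\}$ along $Y=V(I)\cap U$. The condition becomes
\[H^0(\mathfrak X,\mathcal O_{\mathfrak X})=A\ \text{(more precisely, the formal functions are just constants, with } H^1 \text{ control)}.\]
Equivalently, we want the natural map $H^{d-1}_{\mathfrak m}$-type comparison to be injective. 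Concretely, we aim to prove that $Y$ is $G2$ in Hartshorne's sense, i.e. formal rational functions along $Y$ come from $A$, and then to use the identification of $H^{d-1}_I(A)^{\vee}$ with the cokernel of $A\to H^0(\mathfrak X,\mathcal O)$.

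To get this, we cut down. Choose a general element $x$ of $\mathfrak m$ such that $A/xA$ is regular. Since $A$ is ramified, $p\in\mathfrak m^2$, so $x$ can be chosen with $A/x$ regular of the same mixed characteristic, or we pass to an equicharacteristic hyperplane where possible. A Bertini/Grothendieck-Lefschetz connectedness argument keeps the punctured spectrum of $A/(I+x)$ connected when $\dim A/I\ge 3$. Induction on $\dim A/I$ then reduces us to the base case $\dim A/I=2$. In that case $Y$ is a curve in the punctured spectrum, i.e. the normal surface singularity picture. Here we resolve $\Spec A/I$ (or its normalization) by a regular surface, and use Lipman's results on rational singularities and resolutions of two-dimensional excellent schemes to show that the formal neighbourhood of the connected exceptional configuration has only constant formal functions. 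Passing from the surface back to $\mathfrak X$ uses the comparison theorem for formal functions along with connectedness.

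\emph{Main obstacle.} The main obstacle is the base case $\dim A/I=2$. Without a Frobenius or a characteristic-zero de Rham argument, one must directly show $H^0(\mathfrak X,\mathcal O_{\mathfrak X})$ is as small as possible, or equivalently a finiteness/injectivity statement for $H^1$ of the formal completion. I would first attempt this via a resolution of the surface and Lipman's vanishing for regular two-dimensional schemes, making the negative definiteness of the intersection form on the connected exceptional locus yield the required vanishing.
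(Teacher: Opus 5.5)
\textbf{There is a genuine gap: your reduction to the surface case is false as stated, and your base case lacks the input that uses ramification.}

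\textbf{The reduction to $\dim A/I=2$.} Cutting by a general $x\in\mathfrak m$ does not preserve connectedness of the punctured spectrum. Grothendieck's connectedness theorem gives this only under a stronger hypothesis: $\Spec A/I$ must stay connected after removing any closed subset of dimension at most one.

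Here is a concrete failure. Let $V$ be a ramified DVR with uniformizer $\pi$, let $A=V\llbracket x_1,\dots,x_4\rrbracket$, and let $I=(x_1,x_2)\cap(x_3,x_4)$.
\begin{itemize}
\item The two $3$-dimensional components of $V(I)$ meet along $V(x_1,\dots,x_4)\cong\Spec V$. So the punctured spectrum of $A/I$ is connected, and $H^4_I(A)=0$ by Mayer--Vietoris.
\item Now take any $x\in\mathfrak m\setminus(x_1,\dots,x_4)$, e.g.\ $x=\pi+x_1$, for which $A/xA\cong V\llbracket x_2,x_3,x_4\rrbracket$ is still ramified regular. The two components of $V(I+xA)$ then meet only at the closed point, so the punctured spectrum is disconnected and $H^3_I(A/xA)\neq 0$.
\end{itemize}
So your inductive hypothesis cannot be applied to $(A/xA,I)$. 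Even when the section is connected, $H^{d-2}_I(A/xA)=0$ only makes $x$ a nonzerodivisor on $H^{d-1}_I(A)$. You would still need $H^{d-1}_I(A)$ to be supported at $\mathfrak m$, and you have not arranged equidimensionality for that.

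The paper instead keeps $A$ fixed and replaces $I$ by a prime $\mathfrak p\supseteq I$ with $\dim A/\mathfrak p=2$. Generization maps on stalks of the formal completion of a normal excellent scheme are injective, and the completed stalks of $X_{/V(I)}$ and $X_{/V(\mathfrak p)}$ agree. Hence punctured formal functions along $V(I)$ inject into those along $V(\mathfrak p)$, and SVT$(A,\mathfrak p)$ gives SVT$(A,I)$.

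\textbf{The base case.} Your plan for the surface case does not contain the idea that makes the proof work, and ramification plays no substantive role in it. The relevant formal functions live on the completion of the ambient regular scheme along the surface, not on the surface itself.

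Take a resolution $Z'\to Z$ embedded in $P=\BP^N_A$ with ideal sheaf $J$. Extending formal functions across the exceptional fibre $Z'_s$ requires controlling $H^0\bigl(Z'_s,(\mathcal O_P/J^n)\rest{Z'_s}(cZ'_s)\bigr)$ for $c>0$. These groups are filtered by the twisted conormal pieces $(J^r/J^{r+1})\rest{Z'_s}(cZ'_s)$ for all $r\ge 0$.

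Negative definiteness of the intersection form (Du Val/Lipman), together with anti-nefness of $Z'_s$, only handles $r=0$, i.e.\ it gives $H^0(Z'_s,\mathcal O_{Z'_s}(cZ'_s))=0$. Results specific to rational singularities do not help, since $A/I$ need not be rational.

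The paper handles $r>0$ with Gabber's theorem: in ramified mixed characteristic the normal bundle $\shom_{\mathcal O_P}(J/J^2,\mathcal O_{Z'_s})$ is globally generated. Hence every $J^r/J^{r+1}\rest{Z'_s}$ embeds, locally split, into a free $\mathcal O_{Z'_s}$-module. This is exactly where ramification is used.

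\textbf{Passing back to $X$.} Your step ``comparison theorem for formal functions along with connectedness'' is also not an argument yet. One must still show that formal functions on $P$ along $Z'$ are just $A$. The paper does this by induction over the thickenings $\BP^N_{A/\mathfrak m^n}$, starting from Hartshorne's theorem that formal functions on $\BP^N_k$ along a connected curve are constant. That theorem requires $k$ algebraically closed, which is a reduction (via a gonflement) that your proposal also omits.
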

While the origins and initial work towards the problem were rooted in geometry, the efforts towards it drifted further and further algebraic as time carried on. Our method of proof returns to the geometric roots by casting it as a problem on lifting certain formal functions over a closed point. Here we crucially rely on a result of Gabber on the global generation of a normal bundle for subschemes of projective space which uses the assumption that we are in ramified mixed characteristic. This represents a departure from previous approaches to the problem.

We would especially like to draw attention to recent work by Batavia \cite{bataviaVanishingLocalCohomology2026} of a stronger result on the vanishing of local cohomology modules, which is a mixed characteristic analogue of results from \cite{hunekeVanishingLocalCohomology1990}, as well as Zhang's proof of the SVT in unramified mixed characteristic \cite{zhangSecondVanishingTheorem2025}. Both of these results work only in unramified mixed characteristic, and just like in \cite{hunekeVanishingLocalCohomology1990} rely on being able to write a complete regular local ring as a power series ring over a very nice ring (a field in equicharacteristic, or a particularly nice DVR in the unramified case). In the case where the ring is ramified, this no longer is the case; although you are able to write a ramified complete regular ring as an Eisenstein extension of an unramified subring which is a power series ring over a DVR, this is not enough to apply the same methods. As appears to often be the case, this seems to be the main issue that has stalled progress on the ramified case so far, but using the result of Gabber we are able to prove the result via a geometric method only in the ramified case. This seems to strongly mirror the story for non-negativity of intersection multiplicity for which Gabber originally proved his result on global generation, and so the methods of this paper may be adaptable to prove other ring-theoretic results which are only known to be true in the unramified case.  In this case, we believe it may be helpful to consider the ramification as a hypothesis one can leverage rather than a defect.

Our proof proceeds in several steps by making a series of reductions. First we will recast the problem in terms of formal schemes, and using this we will reduce the problem to when $A/I$ is a two-dimensional integral domain. This is not novel, but we believe our approach is more intuitive than previous approaches owing to being stated in the language of formal schemes and will serve as a warmup to the methods used later on. We then reduce to the case where $A$ has an algebraically closed residue field. While we could not find this result in the literature we suspect it was known to experts. We then reduce to the case where $A/I$ is normal and thus has an isolated singularity; this represents the first genuine departure from known results and crucially uses our geometric framework. We then conclude by taking a resolution of $A/I$ embedded in a projective space over $A$, and use cohomological arguments to argue that the required formal functions extend to regular functions on $\BP^n_A$, which are simply elements of $A$ concluding our proof. This will be the only time that we use the hypothesis that we are in ramified mixed characteristic.

\section{Formal Schemes}
We will refer to the Second Vanishing Theorem as SVT, and we will say that SVT$(A,I)$ holds if the Second Vanishing Theorem is true for the pair $A,I$.

First we note that as $A^{\dagger}$ is a faithfully flat extension of $A$ of the same dimension, and since $H^i_I(M)\otimes A^{\dagger} = H^i_{IA^{\dagger}}(M\otimes A^{\dagger})$, by base change we can immediately assume that $A$ is complete, with a separably closed residue field.

That 1. implies 2. in our statement of the SVT is easy, c.f \cite[Theorem 1.4]{zhangSecondVanishingTheorem2025}. So, we focus on proving the converse.

Given a scheme $X$ and a closed subscheme $Z$, we denote by $X_{/Z}$ the formal scheme obtained by completing $X$ along $Z$. In short, the topological space of $X_{/Z}$ is just $Z$ while its structure sheaf is the sheaf $\varprojlim \mathcal{O}_X/I_Z^n$. We now prove various generalities on formal schemes and completions. These results are not needed in this generality for this paper.

\begin{lem}\label{completions}
    Let $A$ be a Noetherian ring, and $I \subseteq J$ ideals of $A$. Then $(\widehat{A}^I)^{\widehat{}_J} = \widehat{A^J}$, and $(A/I)^{\widehat{}_{J/I}} = \widehat{A^J}/I\widehat{A^J}$.
\end{lem}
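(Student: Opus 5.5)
Both identities follow from the standard facts about adic completions of Noetherian rings: for finite modules over a Noetherian ring, completion is exact, it agrees with tensoring by the completed ring, and that ring is flat. The plan is to express every completion in the statement as an inverse limit of quotients and then use the containment $I\subseteq J$ to compare these limits directly.

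For the first identity, write $B=\widehat{A}^I=\varprojlim_n A/I^n$. This ring is Noetherian. Since $A/I^n$ is a finite $A$-module, we have $B/I^nB \cong A/I^n$, and likewise $B/J^mB \cong A/J^m\otimes_A B$. By definition,
\[(\widehat{A}^I)^{\widehat{}_J}=\varprojlim_m B/J^mB.\]
We compute $B/J^mB$ as the $I$-adic completion of the finite $A$-module $A/J^m$, namely $\varprojlim_n A/(J^m+I^n)$. Because $I\subseteq J$, we have $I^m\subseteq J^m$, so $J^m+I^n=J^m$ for all $n\geq m$. The inverse system is therefore eventually constant, and $B/J^mB\cong A/J^m$. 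Taking the limit over $m$ gives
\[\varprojlim_m A/J^m=\widehat{A^J}.\]
To be careful, one should check that the transition maps are compatible. This is clear, because all the isomorphisms above are induced by the natural maps $A\to B\to B/J^mB$.

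The second identity works the same way. The $J/I$-adic completion of $A/I$ is
\[\varprojlim_m (A/I)/(J/I)^m=\varprojlim_m A/(J^m+I).\]
Since $\widehat{A^J}$ is flat over $A$ and $A/I$ is finitely presented, $\widehat{A^J}/I\widehat{A^J}\cong A/I\otimes_A\widehat{A^J}$. This is the $J$-adic completion of the finite $A$-module $A/I$, computed as $\varprojlim_m (A/I)/J^m(A/I)=\varprojlim_m A/(I+J^m)$. The two limits coincide, which proves the second identity.

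The only real input is the Noetherian fact that, for a finite module $M$, one has $\widehat{M}^J\cong M\otimes_A\widehat{A^J}$. This is what identifies $B/J^mB$ and $\widehat{A^J}/I\widehat{A^J}$ with the corresponding limits, and it is the step to cite carefully, for example from Matsumura or Atiyah–Macdonald. Everything else is bookkeeping of inverse limits, where $I\subseteq J$ makes the double system cofinal with the diagonal.
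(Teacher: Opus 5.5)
Your proof is correct and follows essentially the same route as the paper. Both arguments identify $B/J^mB$ (resp.\ $\widehat{A^J}/I\widehat{A^J}$) with the completion of the finite module $A/J^m$ (resp.\ $A/I$) via $M\otimes_A\widehat{A^J}\cong\widehat{M}^J$, and then use $J^m+I^n=J^m$ for $n\geq m$ to collapse the inner limit. (Minor point: the isomorphism $\widehat{A^J}/I\widehat{A^J}\cong A/I\otimes_A\widehat{A^J}$ needs only right exactness of the tensor product, not flatness.)
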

\begin{proof}
    By definition, $(\widehat{A}^I)^{\widehat{}_J} = \varprojlim \widehat{A^I}/J^n\widehat{A^J}$. Note that $\widehat{A^I}$ is a flat $A$-algebra, and so the have exact sequences
    \[0\to J^n\to A\to A/J^n\to 0\]
    give us exact sequences
    \[0\to J^n\widehat{A^I}\to \widehat{A^I}\to \left(A/J^n\right)^{\widehat{}_I}\to 0.\]
    Note that definitionally $\left(A/J^n\right)^{\widehat{}_I} = \varprojlim_m (A/J^n+I^m)$, which as $I \subseteq J$, for $m \geq n$ is equal to $A/J^n$ and thus it is just equal to $A/J^n$. This means that
    \[(\widehat{A}^I)^{\widehat{}_J} = \varprojlim \widehat{A^I}/J^n\widehat{A^J} = \varprojlim\left(A/J^n\right)^{\widehat{}_I} = \varprojlim A/J^n = \widehat{A^J}\]
    as desired.

    Note that $(A/I)^{\widehat{}_{J/I}} = (A/I)^{\widehat{}_{J}}$, and so we can look at the exact sequence
    \[0\to I\to A\to A/I\to 0\]
    to obtain, using flatness of $\widehat{A^J}$,
    \[0\to I\widehat{A^J}\to \widehat{A^J}\to (A/I)^{\widehat{}_J}\to 0\]
    so that $(A/I)^{\widehat{}_{J/I}} = \widehat{A^J}/I\widehat{A^J}$ as desired.
\end{proof}

\begin{lem}\label{restrict-complete}
    Let $\mathcal{F}$ be a coherent $\mathcal{O}_X$-module for $X$ a (locally) Noetherian scheme. Let $\mathcal{F}_{/Y}$ denote the formal completion of a coherent sheaf $\mathcal{F}$ on $X$ along the closed subset $Y$. Let $Z < W < X$ for $Z,W$ closed subschemes. Then
    \[(\mathcal{F}\rest{W})_{/Z} = (\mathcal{F}_{/W})\rest{W_{/Z}}.\]
\end{lem}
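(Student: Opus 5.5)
The statement is local on $X$: both sides are sheaves on the topological space $Z$, and formal completion commutes with restriction to open subsets. So I will first reduce to the affine case. Cover $X$ by open affines $U=\Spec A$ with $A$ Noetherian, and check the identification on each $U\cap Z$, making sure the isomorphisms are canonical so they glue. On such a $U$ write $\mathcal{F}=\widetilde{M}$ with $M$ a finitely generated $A$-module. Let $W=V(I)$ and $Z=V(J)$; since $Z\subseteq W$ as closed subschemes we have $I\subseteq J$. On $U$ I would make the following identifications.
\begin{enumerate}
\item $\mathcal{F}\rest{W}$ corresponds to the $A/I$-module $M/IM$.
\item $W_{/Z}$ is $\mathrm{Spf}$ of $(A/I)^{\widehat{}_{J/I}}$, which Lemma \ref{completions} identifies with $\widehat{A^J}/I\widehat{A^J}$.
\item The left side $(\mathcal{F}\rest{W})_{/Z}$ is the coherent sheaf on $W_{/Z}$ attached to the module $(M/IM)^{\widehat{}_{J}}$.
\item The right side is obtained in two steps. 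The sheaf $\mathcal{F}_{/W}$ on $X_{/W}=\mathrm{Spf}\,\widehat{A^I}$ corresponds to $\widehat{M^I}=M\otimes_A\widehat{A^I}$. The closed formal subscheme $W_{/Z}$ of $X_{/W}$ is cut out by the ideal generated by $J$, so restricting there gives the module $\widehat{M^I}\otimes_{\widehat{A^I}} (A/I)^{\widehat{}_{J}}$.
\end{enumerate}

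It then remains to compare these two modules. For finitely generated modules over Noetherian rings, completion is tensoring with the completed ring. This gives
\[(M/IM)^{\widehat{}_J}= M\otimes_A A/I\otimes_A \widehat{A^J}= M\otimes_A \widehat{A^J}/I\widehat{A^J}.\]
On the other side,
\[M\otimes_A\widehat{A^I}\otimes_{\widehat{A^I}}(A/I)^{\widehat{}_J}=M\otimes_A (A/I)^{\widehat{}_J},\]
which by Lemma \ref{completions} is the same module. The first part of Lemma \ref{completions}, $(\widehat{A^I})^{\widehat{}_J}=\widehat{A^J}$, ensures that the two $\widehat{A^J}$-module structures agree. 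Both sides are coherent sheaves on the Noetherian affine formal scheme $\mathrm{Spf}\,\widehat{A^J}/I\widehat{A^J}$, where coherent sheaves are equivalent to finitely generated modules via $N\mapsto N^\Delta$. The module isomorphism therefore gives the sheaf isomorphism.

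The step I expect to be the main obstacle is the bookkeeping in item 4. There I must justify that restricting a coherent sheaf on the formal scheme $X_{/W}$ to the closed formal subscheme $W_{/Z}$ is computed by this tensor product. This needs that $W_{/Z}\to X_{/W}$ is a closed immersion of formal schemes with underlying space $Z$, and that pullback of $\widehat{M^I}^\Delta$ along it is $(\widehat{M^I}\otimes_{\widehat{A^I}} B)^\Delta$, where $B=(A/I)^{\widehat{}_J}$. This is the standard fact that pullback of coherent sheaves along adic morphisms of Noetherian affine formal schemes is given by completed tensor product. Here the tensor product is already complete, because $B$ is finite over $\widehat{A^J}$. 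I would cite EGA I, \S10 for this. Finally, the isomorphisms are natural in $A$, and they are compatible with localization at $f\in A$, with completed localizations $A_{\{f\}}$ in place of $A_f$ on the formal side. So they glue over an affine cover to give the global statement.
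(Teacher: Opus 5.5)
Your computation is the same as the paper's. You reduce to $\Spec A$ with $I\subseteq J$, then use Lemma \ref{completions} to identify both sides with $M\otimes_A\widehat{A^J}/I\widehat{A^J}$. The paper writes the right-hand side as $\widehat{M^J}\otimes_{\widehat{A^J}}(A/I)^{\widehat{}_{J/I}}$, i.e.\ it restricts $\mathcal{F}_{/Z}$ to $W_{/Z}\subseteq X_{/Z}$, but this gives the same module.

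The justification you give in item 4 is false as stated, though the formula is right.

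\begin{itemize}
\item The natural morphism $W_{/Z}\to X_{/W}$ corresponds to $\widehat{A^I}\to(A/I)^{\widehat{}_J}$, which kills $I$. In general it is neither a closed immersion nor adic. Take $A=k[x,y]$, $I=(x)$, $J=(x,y)$. The map is $k[y]\llbracket x\rrbracket\to k\llbracket y\rrbracket$ with $x\mapsto 0$, and its image is only $k[y]$. Also $I$ generates the zero ideal, which is not an ideal of definition of $k\llbracket y\rrbracket$.
\item Cutting $X_{/W}$ out by $J$ does not give $W_{/Z}$. Since $I(A/J)=0$, you get $\widehat{A^I}/J\widehat{A^I}=A/J$ with the discrete topology, i.e.\ the ordinary scheme $\Spec A/J$.
\end{itemize}

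So the EGA statement for closed immersions or adic morphisms that you planned to cite does not apply. You do not need it. For any morphism $f\colon\mathrm{Spf}\,B\to\mathrm{Spf}\,C$ of Noetherian affine formal schemes and any finite $C$-module $N$, one has $f^*(N^\Delta)\cong(N\otimes_C B)^\Delta$. To see this, apply the right exact functor $f^*$ to a presentation $\mathcal{O}^m\to\mathcal{O}^n\to N^\Delta\to 0$. Then use $f^*\mathcal{O}_{\mathrm{Spf}\,C}=\mathcal{O}_{\mathrm{Spf}\,B}$ and exactness of $(-)^\Delta$ on finite $B$-modules.

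Alternatively, follow the paper's route through $X_{/Z}$. There $W_{/Z}$ really is the closed formal subscheme cut out by $I\widehat{A^J}$, so restriction is given by tensoring with $\widehat{A^J}/I\widehat{A^J}$. With either repair, the rest of your argument, including the gluing, goes through.
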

\begin{proof}
    This is a local question, so we may assume that the formal schemes are affine. We then need to show the following statement, given a Noetherian ring $A$, with ideals $I < J$, and finite module $M$, we have to show
    \[(M/IM)^{\widehat{_J}} = \widehat{M^J}\otimes_{\widehat{A^J}} (A/I)^{\widehat{}_{J/I}}.\]
    By use of Lemma \ref{completions} below we have that the left hand side is equal to
    \[M\otimes_A A/I\otimes_A \widehat{A^J} = M\otimes_A \widehat{A^J}/I\widehat{A^J} = M\otimes_A (A/I)^{\widehat{}_{J/I}},\]
    which the right hand side also reduces to.
\end{proof}

\begin{lem}\label{sheaf thickenings}
    Let $X$ be a (locally) Noetherian scheme, and $Z,W$ closed subschemes of $X$. Let $W_n$ denote the $n$-th infinitesimal neighborhood of $W$ in $X$. Then, considering these as sheaves on $X$, the system of sheaves $\mathcal{O}_{(W_n)_{/W_n\cap Z}}$ forms an inverse system such that $\varprojlim \mathcal{O}_{(W_n)_{/W_n\cap Z}} = \mathcal{O}_{X_{/W\cap Z}}$.
\end{lem}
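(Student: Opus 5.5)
The statement is local on $X$, so we reduce to $X=\Spec A$ affine with $A$ Noetherian. Let $J$ be the ideal of $W$ and $K$ the ideal of $Z$. Then $W_n=\Spec A/J^n$, and the underlying closed subset of $W_n\cap Z$ is $V(J+K)$ for every $n$. The sheaf $\mathcal{O}_{(W_n)_{/W_n\cap Z}}$ is therefore supported on the fixed closed set $V(J+K)$, which is also the support of $\mathcal{O}_{X_{/W\cap Z}}$. Hence all the sheaves in question live on a common topological space, and it makes sense to take their inverse limit there (or on $X$, after pushing forward).

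First we construct the inverse system. The surjections $A/J^{n+1}\to A/J^n$ induce surjections on completions along $J+K$, and so give maps $\mathcal{O}_{(W_{n+1})_{/W_{n+1}\cap Z}}\to\mathcal{O}_{(W_n)_{/W_n\cap Z}}$. These are compatible with restriction to opens; concretely, on a basic open $D(f)$ they are the maps $(A_f/J^{n+1}A_f)^{\widehat{}_{J+K}}\to (A_f/J^nA_f)^{\widehat{}_{J+K}}$. By Lemma \ref{completions}, applied with $I=J^n\subseteq J+K$, we have
\[(A/J^n)^{\widehat{}_{(J+K)/J^n}}=\widehat{A^{J+K}}/J^n\widehat{A^{J+K}},\]
and the same holds after localizing at $f$.

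Next, for the limit, we compute sections over each basic open $D(f)$ meeting $V(J+K)$. Since inverse limits of sheaves are computed sectionwise, it suffices to show that
\[\varprojlim_n \widehat{A_f^{J+K}}/J^n\widehat{A_f^{J+K}}=\widehat{A_f^{J+K}},\]
compatibly with restrictions. The ring $B=\widehat{A_f^{J+K}}$ is Noetherian and complete with respect to $(J+K)B$. Because $JB\subseteq(J+K)B$, $B$ is also $J$-adically complete: this is the standard fact that completeness for a larger ideal implies completeness for a smaller one in a Noetherian ring. Alternatively, it follows from the first part of Lemma \ref{completions}, which gives $(\widehat{B}^{J})^{\widehat{}_{J+K}}=\widehat{B}^{J+K}=B$, combined with the observation that $\widehat{B}^J$ is already $(J+K)$-adically complete. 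Hence $\varprojlim_n B/J^nB=B$. Finally, since the sheaves involved are determined by their values on the basis of affine opens, the sectionwise isomorphisms glue to the claimed isomorphism of sheaves.

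The main obstacle is the claim that a Noetherian $(J+K)$-adically complete ring is $J$-adically complete. I would prove it directly. Given a $J$-adically Cauchy sequence, note that it is also $(J+K)$-adically Cauchy, so it converges in $B$. The limit is a $J$-adic limit because each $J^nB$ is closed in the $(J+K)$-adic topology, which holds by Krull's intersection theorem applied to the finite module $B/J^nB$ over the complete ring $B$. A secondary point is checking that the completions of $A_f/J^n$ along $J+K$ agree with the sections of the formal structure sheaf on $D(f)$. This is the standard description of coherent formal completions on affines and I would take it as known.
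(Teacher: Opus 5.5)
Your proof is correct, but it takes a different route from the paper's.

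\textbf{The paper's route.} The paper does not reduce to affines or invoke Lemma \ref{completions}. It works directly with quasi-coherent quotients on $X$. It observes that $(J^n+I)^m+J^n=J^n+I^m$, with $I$ your $K$, so that $\varprojlim_n\mathcal{O}_{(W_n)_{/W_n\cap Z}}=\varprojlim_n\varprojlim_m\mathcal{O}_X/(J^n+I^m)$. It then uses two cofinality facts. The first is passing to the diagonal $m=n$. The second is the chain of inclusions $(J+I)^{2n}\subseteq J^n+I^n\subseteq (J+I)^n$. Together these give $\varprojlim_n\mathcal{O}_X/(J+I)^n=\mathcal{O}_{X_{/W\cap Z}}$.

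\textbf{Your route.} You identify each term on $D(f)$ with $B/J^nB$, where $B=\widehat{A_f}^{J+K}$. This reduces the lemma to the $J$-adic completeness of $B$, which you prove via Krull's intersection theorem. That argument is valid. However, the step you flag as the main obstacle is also handled by the same double-limit trick. Since $J^m\subseteq(J+K)^m$,
\[
\varprojlim_n B/J^nB=\varprojlim_n\varprojlim_m A_f/(J^n+(J+K)^m)=\varprojlim_m A_f/(J+K)^m=B.
\]

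\textbf{Comparison.} The paper's argument is shorter and purely formal: it only interchanges limits and uses cofinality, with no flatness of completion and no Krull intersection theorem. Yours yields more structural information. It gives the explicit description $\mathcal{O}_{(W_n)_{/W_n\cap Z}}\cong\mathcal{O}_{X_{/W\cap Z}}/J^n\mathcal{O}_{X_{/W\cap Z}}$. It also proves the general fact that adic completeness for a larger ideal passes to a smaller one.

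\textbf{Minor point.} The ``alternative'' justification you sketch via Lemma \ref{completions} rests on the unproved claim that $\widehat{B}^J$ is already $(J+K)$-adically complete. That claim is essentially what is being proved, but your main argument does not depend on it.
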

\begin{proof}
    That the sheaves $\mathcal{O}_{(W_n)_{/W_n\cap Z}}$ form an inverse system of sheaves on $X$ is obvious, and so we may take the limit. Let $J$ be the ideal sheaf of $W$ and $I$ the ideal sheaf of $Z$. Expanding the definition of these sheaves, we end up with
    \begin{align*}
    \varprojlim_n \mathcal{O}_{(W_n)_{/W_n\cap Z}} &= \varprojlim_n(\varprojlim_m\frac{\mathcal{O}_X/J^n}{(J^n + I)^m(\mathcal{O}_X/J^n)}\\
    & = \varprojlim_n(\varprojlim_m\frac{\mathcal{O}_X/J^n}{(J^n + I^m)(\mathcal{O}_X/J^n)}\\
    &= \varprojlim_n(\varprojlim_m\ \mathcal{O}_X/(J^n + I^m))\\
    &= \varprojlim_n\ \mathcal{O}_X/(J^n + I^n)\\
    &= \varprojlim_n\ \mathcal{O}_X/(J + I)^n = \mathcal{O}_{X_{/W\cap Z}}
    \end{align*}
\end{proof}

\begin{lem}\label{integralcomplete}
    Let $A$ be an excellent normal local ring, then the completion of $A$ with respect to any ideal $I$ is an integral domain.
\end{lem}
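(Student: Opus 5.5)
The plan is to embed the $I$-adic completion into the $\mathfrak{m}$-adic completion of $A$, and to use that the latter is a domain because $A$ is excellent and normal. Throughout, write $\mathfrak{m}$ for the maximal ideal of $A$ and assume $I$ is a proper ideal, so $I\subseteq \mathfrak{m}$. This assumption is needed: for $I=A$ the completion is the zero ring, which is not a domain.

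\textbf{Step 1: $\widehat{A^I}$ is local.} The ring $\widehat{A^I}$ is Noetherian and $I$-adically complete, so $I\widehat{A^I}$ lies in its Jacobson radical. Hence every maximal ideal of $\widehat{A^I}$ contains $I\widehat{A^I}$. By the second part of Lemma \ref{completions} with $J=I$, we have $\widehat{A^I}/I\widehat{A^I}\cong A/I$. Since $A/I$ is local, $\widehat{A^I}$ is a Noetherian local ring with maximal ideal $\mathfrak{m}\widehat{A^I}$.

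\textbf{Step 2: $\widehat{A^I}$ embeds in $\widehat{A}$.} Apply Lemma \ref{completions} with $I\subseteq J=\mathfrak{m}$. It shows that the $\mathfrak{m}\widehat{A^I}$-adic completion of $\widehat{A^I}$ is $\widehat{A}$, the $\mathfrak{m}$-adic completion of $A$. The completion of a Noetherian local ring at its maximal ideal is faithfully flat over it; alternatively, injectivity follows directly from the Krull intersection theorem. So the map $\widehat{A^I}\to\widehat{A}$ is injective, and $\widehat{A^I}$ is a subring of $\widehat{A}$.

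\textbf{Step 3: $\widehat{A}$ is a domain.} Since $A$ is excellent, the map $A\to\widehat{A}$ has geometrically regular fibres. A standard result (EGA IV 7.8.3(v)) then gives that $\widehat{A}$ is normal because $A$ is normal. A normal Noetherian local ring is a domain, so $\widehat{A}$ is a domain.

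\textbf{Conclusion and main difficulty.} By Step 2, $\widehat{A^I}$ is a subring of the domain $\widehat{A}$, hence is itself a domain. The main input is Step 3, the preservation of normality under completion for excellent rings, which we quote rather than reprove. The only other point requiring care is that the mixed completion in Step 2 is identified correctly, and this is exactly what Lemma \ref{completions} provides.
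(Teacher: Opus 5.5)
Your proof is correct and follows the same route as the paper. Both arguments identify the $\mathfrak{m}$-adic completion of $\widehat{A^I}$ with $\widehat{A}$ via Lemma \ref{completions}, use excellence to get that $\widehat{A}$ is normal and hence a domain, and conclude from $\widehat{A^I}\hookrightarrow\widehat{A}$; your Step 1 (that $\widehat{A^I}$ is local with maximal ideal $\mathfrak{m}\widehat{A^I}$, so the Krull intersection theorem gives this injectivity) and your remark that $I$ must be proper supply details the paper leaves implicit.
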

\begin{proof}
    As was mentioned before, the $\mathfrak{m}$-adic completion of $A$ is again normal, and is thus a domain. The $\mathfrak{m}$-adic completion of the $I$-adic completion of $A$ is just the $\mathfrak{m}$-adic completion of $A$ by Lemma \ref{completions}, and the map from the $I$-adic completion into this ring is an injection. Therefore, the $I$-adic completion of $A$ is an integral domain.
\end{proof}

\begin{lem}\label{formallocalcomplete}
    Let $X$ be a (locally) Noetherian scheme, and $Z$ a closed subscheme, and $x\in Z$. Then $\widehat{(\mathcal{O}_{X_{/Z}})_x}\cong \widehat{\mathcal{O}_{X,x}}$.
\end{lem}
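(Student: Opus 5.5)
The question is local, so I will reduce to the affine case. Choose an affine open neighborhood $\Spec A$ of $x$ in $X$ with $A$ Noetherian, and let $I$ be the ideal of $Z\cap \Spec A$. Let $\mathfrak{p}$ be the prime corresponding to $x$; since $x\in Z$ we have $I\subseteq \mathfrak{p}$. On this open set $X_{/Z}$ is the formal spectrum $\mathrm{Spf}(\widehat{A^I})$. Its stalk at $x$ is the colimit of $\widehat{A^I}_{\{f\}}$ over $f\notin \mathfrak{p}$. Here $\widehat{A^I}_{\{f\}}$ denotes the $I$-adic completion of $A_f$, which equals the $I$-adic completion of $(\widehat{A^I})_f$. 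So the stalk is $B:=\varinjlim_{f\notin\mathfrak{p}} (A_f)^{\widehat{}_I}$.

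This $B$ is a local ring whose maximal ideal $\mathfrak{n}$ is generated by the image of $\mathfrak{p}$. It is a standard fact that stalks of locally Noetherian formal schemes are local, and Noetherian by EGA $0_I$ 7.6.17. Reading the statement, the completion on the left is the $\mathfrak{n}$-adic one, and I must show $\widehat{B}\cong \widehat{A_{\mathfrak{p}}}$, with the right-hand side $\mathfrak{p}$-adic.

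The plan is to compare both rings modulo powers of the maximal ideals. There are natural maps
\[A_{\mathfrak{p}} \to B \to \widehat{B},\]
where the first map is induced from $A_f\to (A_f)^{\widehat{}_I}$. The key computation is $B/\mathfrak{p}^nB \cong A_{\mathfrak{p}}/\mathfrak{p}^nA_{\mathfrak{p}}$ for each $n$. This follows by commuting the filtered colimit with the quotient, which is exact, and then applying Lemma \ref{completions} to each $A_f$ with the ideals $I\subseteq \mathfrak{p}$, or more simply its proof:
\[(A_f)^{\widehat{}_I}/\mathfrak{p}^n(A_f)^{\widehat{}_I} \cong (A_f/\mathfrak{p}^nA_f)^{\widehat{}_I} = A_f/\mathfrak{p}^nA_f.\]
The first isomorphism uses flatness of $I$-adic completion for the Noetherian ring $A_f$. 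The last equality holds because $I^n\subseteq \mathfrak{p}^n$, so $A_f/\mathfrak{p}^nA_f$ is already $I$-adically discrete. Taking the colimit over $f\notin\mathfrak{p}$ gives $A_{\mathfrak{p}}/\mathfrak{p}^n A_{\mathfrak{p}}$, since localization commutes with quotients. Passing to the inverse limit over $n$ then gives
\[\widehat{B} = \varprojlim_n B/\mathfrak{n}^n = \varprojlim_n B/\mathfrak{p}^nB \cong \varprojlim_n A_{\mathfrak{p}}/\mathfrak{p}^nA_{\mathfrak{p}} = \widehat{\mathcal{O}_{X,x}}.\]
These isomorphisms are compatible with the maps from $A_{\mathfrak{p}}$, so the result is a canonical ring isomorphism.

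I expect the main obstacle to be justifying that $\mathfrak{n}=\mathfrak{p}B$, so that the $\mathfrak{n}$-adic and $\mathfrak{p}B$-adic topologies agree. To handle it, I will first note that $IB$ lies in the Jacobson radical of each $(A_f)^{\widehat{}_I}$, hence in $\mathfrak{n}$. Then $B/IB\cong \varinjlim A_f/IA_f = (A/I)_{\mathfrak{p}}$, which is local with maximal ideal $\mathfrak{p}$. Therefore the maximal ideal of $B$ is the preimage of $\mathfrak{p}(A/I)_{\mathfrak{p}}$, which is $\mathfrak{p}B+IB=\mathfrak{p}B$. This same computation also shows that $B$ is local: an element not in $\mathfrak{p}B$ is a unit modulo $IB$, and so a unit in some $(A_f)^{\widehat{}_I}$, because that ring is $I$-adically complete. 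With this in hand, the remaining steps are routine exactness checks.
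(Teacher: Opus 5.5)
Your proposal is correct and follows essentially the same route as the paper. Both arguments pass to the affine case, write the stalk as $\varinjlim_{f\notin\mathfrak{p}}(A_f)^{\widehat{}_I}$, reduce modulo $\mathfrak{p}^n$ by commuting the quotient with the colimit and using that $A_f/\mathfrak{p}^nA_f$ is already $I$-adically complete, and then take the inverse limit. You also check two points the paper leaves implicit: that the maximal ideal of the stalk is $\mathfrak{p}B$, so the $\mathfrak{p}$-adic completion the paper computes really is the completion at the maximal ideal, and that $\widehat{M}=M\otimes\widehat{A}$ for finitely generated modules, which is the right justification for moving the quotient inside the $I$-adic completion.
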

\begin{proof}
    Let us work locally at $x$ so that $X = \Spec A$, $Z = V(I)$, and $x$ corresponds to a prime ideal $\mathfrak{p}$. Then,
    \[(\mathcal{O}_{X_{/Z}})_x = \varinjlim_{f\notin \mathfrak{p}}(\varprojlim_nA_f/I^nA_f).\]
    Let us actually see what the $J$-adic completion of this ring is for $J > I$:
    \[\varprojlim_m[\varinjlim_{f\notin \mathfrak{p}}(\varprojlim_nA_f/I^nA_f)\otimes A/J^m].\]
    As direct limits commute with tensor product, and $A/J^m$ is finitely presented we may bring the tensor product inside to arrive at
    \[\varprojlim_m[\varinjlim_{f\notin \mathfrak{p}}(\varprojlim_nA_f/(J^m + I^n)A_f)] = \varprojlim_m[\varinjlim_{f\notin \mathfrak{p}}(\varprojlim_nA_f/J^mA_f)] = \varprojlim_m[\varinjlim_{f\notin \mathfrak{p}}A_f/J^mA_f],\]
    as $J^m + I^n = J^m$ once $n \geq m$. But we note that the inner colimit simply returns the localization of $A/J^m$ at $\mathfrak{p}$, and so we get
    \[\varprojlim_m (A/J^m)_\mathfrak{p} = \varprojlim A_\mathfrak{p}/J^mA_\mathfrak{p} = \widehat{A_\mathfrak{p}^J}.\]
    Now simply take $J$ to be $\mathfrak{p}$ and we are done.
\end{proof}

\begin{lem}\label{formalgenerization}
    Let $X$ be an excellent normal scheme, and $Z$ a closed subscheme. Then, given a specialization $x\rightsquigarrow y$, the generization map $(\mathcal{O}_{X_{/Z}})_y\to (\mathcal{O}_{X_{/Z}})_x$ is injective.
\end{lem}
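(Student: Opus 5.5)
The plan is to reduce to a statement about local rings and then use Lemma \ref{integralcomplete} together with Lemma \ref{formallocalcomplete}. The question is local on $X$, so we first pass to an affine open containing $y$. Since every open set containing $y$ also contains $x$, we may assume $X=\Spec A$ with $A$ an excellent normal ring, $Z=V(I)$, and that $x,y$ correspond to primes $\mathfrak{q}\subseteq\mathfrak{p}$ of $A$, with both containing $I$ because $x,y\in Z$. Write $B=\widehat{A^I}$ for the $I$-adic completion. Then $(\mathcal{O}_{X_{/Z}})_y=\varinjlim_{f\notin\mathfrak{p}}\widehat{A_f^I}$, and similarly at $x$ with $f\notin\mathfrak{q}$.

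Next, we note that $\widehat{A_f^I}=B_f$ because $A_f$ is finite over $A[T]$, or more directly by flatness and Noetherianity of $B$. Granting this, the stalks become $B_{\mathfrak{p}'}$ and $B_{\mathfrak{q}'}$, with no further completion. Here $\mathfrak{p}'=\mathfrak{p}B$ and $\mathfrak{q}'=\mathfrak{q}B$ are the primes of $B$ over $\mathfrak{p}$ and $\mathfrak{q}$; they are prime because $B/\mathfrak{p}B=A/\mathfrak{p}$ as $\mathfrak{p}\supseteq I$. The generization map is then the localization $B_{\mathfrak{p}'}\to B_{\mathfrak{q}'}$.

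A localization map $R\to R_{\mathfrak{q}}$ is injective whenever $R$ is a domain, so it suffices to show that $B_{\mathfrak{p}'}$ is a domain. I will prove this by embedding it into a domain. By Lemma \ref{formallocalcomplete}, the completion of $(\mathcal{O}_{X_{/Z}})_y=B_{\mathfrak{p}'}$ at its maximal ideal is $\widehat{A_\mathfrak{p}}$. The ring $A_\mathfrak{p}$ is an excellent normal local ring, so its completion $\widehat{A_\mathfrak{p}}$ is normal local and hence a domain; this is the first step of Lemma \ref{integralcomplete}. Since $B_{\mathfrak{p}'}$ is a Noetherian local ring, Krull's intersection theorem makes the map to its completion injective. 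Therefore $B_{\mathfrak{p}'}$ is a domain, and the generization map is injective.

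The main obstacle is the identification of the stalk with a Noetherian local ring, that is, checking that $\varinjlim_f \widehat{A_f^I}$ agrees with the localization $B_{\mathfrak{p}'}$. Without this, Krull's theorem does not apply, and a direct limit of completions need not be Noetherian. If the identification $\widehat{A_f^I}=B_f$ turns out to be awkward, I would instead argue with elements. Take a germ $s$ represented on $D(f)$ that dies at $x$. Injectivity of $\widehat{A_f^I}$ into $\widehat{(A_f)_\mathfrak{p}}$, which is a domain by Lemma \ref{integralcomplete} applied to the normal ring $A_f$ localized, combined with compatibility of these completions under generization, should force $s$ to vanish in $\widehat{A_\mathfrak{q}}\supseteq$ the image, and hence to vanish at $y$. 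Either route reduces the statement to the fact that completions of excellent normal local rings are domains.
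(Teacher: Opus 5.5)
There is a genuine gap, and it sits exactly at the step that is supposed to produce injectivity. The identification $\widehat{A_f^I}=B_f$ is false. $A_f=A[T]/(fT-1)$ is a quotient of $A[T]$, but the $I$-adic completion of $A[T]$ is the restricted power series ring $B\langle T\rangle$, not $B[T]$. Flatness of $B$ only gives $\widehat{M}=M\otimes_A B$ for \emph{finite} $A$-modules, which $A_f$ is not. So $\widehat{A_f^I}=B\langle T\rangle/(fT-1)=B_{\{f\}}$, which is in general strictly bigger than $B_f$. For example, take $A=k[x,y]$, $I=(x)$, $f=y$. Then $\widehat{A_f^I}=k[y,y^{-1}]\llbracket x\rrbracket\ni\sum_n y^{-n}x^n$, and no power of $y$ clears all denominators of this element, so it is not in $B_y=k[y]\llbracket x\rrbracket[1/y]$.

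Hence the stalks are not $B_{\mathfrak p'}$ and $B_{\mathfrak q'}$, and the generization map is not a localization at all. In the same example, $\sum_n y^{-n^2}x^n$ lies in the stalk at $(x)$. Comparing $y$-adic valuations of the $x^N$-coefficients inside $k(y)\llbracket x\rrbracket$ shows it is not of the form $r/t$ with $r,t$ in the stalk at $(x,y)$, which sits inside $k[y]_{(y)}\llbracket x\rrbracket$. What survives is your argument that the stalk at $y$ is a domain: the true stalk is Noetherian local by EGA 0, 7.6.17, so Lemma \ref{formallocalcomplete} and Krull apply. But being a domain does not by itself give injectivity of a map that is not a localization. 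You need an extra input. One option is that the generization map is flat (each $B_{\{f\}}\to B_{\{fg\}}$ is flat, and flatness passes to the colimits); a flat map from a domain to a nonzero ring is injective.

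Your fallback is essentially the paper's route, but it omits the only nontrivial step. First, for a fixed $f$ the map $\widehat{A_f^I}\to\widehat{A^I_{\mathfrak p}}$ need not be injective, e.g.\ if $Z\cap D(f)$ is disconnected. The correct statement, from EGA 0, 7.6.17, is that the \emph{stalk} at $y$ embeds into $\widehat{A^I_{\mathfrak p}}$. Second, with your labelling $y\leftrightarrow\mathfrak p$, $x\leftrightarrow\mathfrak q$, $\mathfrak q\subseteq\mathfrak p$, the sentence ``should force $s$ to vanish in $\widehat{A_{\mathfrak q}}\supseteq$ the image, and hence to vanish at $y$'' presupposes that $\widehat{A^I_{\mathfrak p}}\to\widehat{A^I_{\mathfrak q}}$ is injective. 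That is precisely what the paper proves, as follows:
\begin{enumerate}
\item Since $\widehat{A^I_{\mathfrak p}}/I^n=(A/I^n)_{\mathfrak p}$, the $I$-adic completion of $C=(\widehat{A^I_{\mathfrak p}})_{\mathfrak q}$ is $\widehat{A^I_{\mathfrak q}}$.
\item $\widehat{A^I_{\mathfrak p}}$ is a domain by Lemma \ref{integralcomplete}, so it injects into its localization $C$.
\item $C$ is a Noetherian domain in which $I$ generates a proper ideal, so Krull's intersection theorem gives $C\hookrightarrow\widehat{A^I_{\mathfrak q}}$.
\end{enumerate}
Without this argument, the domain property of the completion is never actually converted into injectivity.
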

\begin{proof}
    As this is local, we may assume that $X = \Spec A$ and $Z = V(I)$, and $x,y$ corresponding to prime ideals $\mathfrak{p},\mathfrak{q}$ respectively.
    By \cite[EGA 0 Proposition 7.6.17]{grothendieckElementsGeometrieAlgebrique1960} the ring $(\mathcal{O}_{X_{/Z}})_y$ embeds into the $I$-adic completion of $A_\mathfrak{q}$, and likewise for $x$ and $\mathfrak{p}$ (in fact they are faithfully flat algebras). Let $\widehat{M}$ denote the $I$-adic completion for now, then we can form the diagram
    \[
    \begin{tikzcd}
        (\mathcal{O}_{X_{/Z}})_y\arrow{r} \arrow[hookrightarrow]{d} & (\mathcal{O}_{X_{/Z}})_x\arrow[hookrightarrow]{d}\\
        \widehat{A_\mathfrak{q}} \arrow{r} & \widehat{A_\mathfrak{p}}
    \end{tikzcd}
    \]
    and so it would be sufficient to show that the bottom map is injective.
    
    As $\left(\widehat{A_\mathfrak{q}}\right)_\mathfrak{p}/\left(I^n\widehat{A_\mathfrak{q}}\right)_\mathfrak{p} =  \left(\widehat{A_\mathfrak{q}}/I^n\widehat{A_\mathfrak{q}}\right)_\mathfrak{p}$ and $\widehat{A_\mathfrak{q}}/I^n\widehat{A_\mathfrak{q}} = A_\mathfrak{q}/I^nA_\mathfrak{q}$ as we are completing with respect to $I$, we see that
    \[\widehat{\left(\widehat{A_\mathfrak{q}}\right)_\mathfrak{p}} = \varprojlim\left((\widehat{A_\mathfrak{q}}/I^n\widehat{A_\mathfrak{q}})_\mathfrak{p}\right) = \varprojlim \left((A_\mathfrak{q}/I^nA_\mathfrak{q})_\mathfrak{p}\right) = \varprojlim A_\mathfrak{p}/I^nA_\mathfrak{p} = \widehat{A_\mathfrak{p}}.\]
    $\widehat{A_\mathfrak{q}}$ is an integral domain by Lemma \ref{integralcomplete}, and thus $\widehat{A}_\mathfrak{q}$ injects into the first ring, and so we see the bottom map is an injection.
\end{proof}

\begin{prop}\label{StalksInjective}
    Let $X$ be an excellent normal scheme and $Z$ a closed subscheme. For any connected subset $U\subseteq X_{/Z}$ and $x\in U$, the map $H^0(U,\mathcal{O}_{X_{/Z}})\to (\mathcal{O}_{X_{/Z}})_x$ is injective.
    \begin{proof}
        Given a specialization $x\rightsquigarrow y$, the map $(\mathcal{O}_{X_{/Z}})_y\to (\mathcal{O}_{X_{/Z}})_x$ is injective by Lemma \ref{formalgenerization}. Suppose that $f\in H^0(U,\mathcal{O}_{X_{/Z}})$ is such that $f_x = 0$. Given any two points $x,y\in U$, as $U$ is connected, there exists a chain of irreducible components $Z_1,\dots, Z_n$ such that $x\in Z_1,\ Z_i\cap Z_{i+1}\neq\varnothing,\ y\in Z_n$. Using injectivity of the generization maps on stalks, we are able to pass $f_x = 0$ between the generic points of the $Z_i$ and their points of intersection to conclude that $f_y = 0$, and thus $f = 0$ as desired.
    \end{proof}
\end{prop}

Now letting $X = \Spec A$ and $Z = V(I)$, from \cite[Theorem III 5.1]{peskineDimensionProjectiveFinie1973} we see that $\mathrm{cd}(A,I)\leq \dim A - 2$ is equivalent to having the canonical map $H^0(X_{/Z},\mathcal{O}_{X_{/Z}}) = A\to H^0(X_{/Z}
\setminus\{\mathfrak{m}\},\mathcal{O}_{X_{/Z}})$ be an isomorphism. It is an injection as the kernel is simply functions which vanish at all points but at the closed point, but any formal function which vanishes at any point on a connected set is globally zero by \ref{StalksInjective}.

What we see then is that SVT($A,I$) is equivalent to asking that formal functions on the punctured spec of $X$ along $Z$ lift to formal functions along all of $Z$.

\section{Reduction to a Surface}\label{reduction-to-surface}
Let $A$ be a complete regular local ring with separably closed residue field, $I\trianglelefteq A$ an ideal with $\dim A/I\geq 2$ and $V(I)\setminus\{\mathfrak{m}\}$ connected. Set $X = \Spec A,\ U = X\setminus\{\mathfrak{m}\},\ Z= V(I)$. Let $\mathfrak{p}\supseteq I$ a prime ideal such that $\dim A/\mathfrak{p} = 2$, setting $Z' = V(\mathfrak{p})$, trivially we have that $U\cap Z'$ is connected. 

\begin{lem}\label{surfacelemma}
    Suppose that $\mathrm{SVT}(A,\mathfrak{p})$ holds, then $\mathrm{SVT}(A,I)$ holds as well.
\end{lem}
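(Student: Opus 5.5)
The plan is to use the reformulation at the end of the section on formal schemes. By \cite[Theorem III 5.1]{peskineDimensionProjectiveFinie1973}, $\mathrm{SVT}(A,I)$ amounts to showing that the injective map
\[A\to H^0(U\cap Z,\mathcal{O}_{X_{/Z}})\]
is surjective. The hypothesis $\dim A/I\geq 2$ holds by assumption. So fix a formal function $f\in H^0(U\cap Z,\mathcal{O}_{X_{/Z}})$; the goal is to produce $a\in A$ with $f=a$. We will do this by pushing $f$ down to a formal function along the smaller subscheme $Z'$, using $\mathrm{SVT}(A,\mathfrak{p})$ there, and then lifting back by a uniqueness argument.

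\textbf{Step 1: restrict to $Z'$ and use $\mathrm{SVT}(A,\mathfrak{p})$.} Since $I\subseteq\mathfrak{p}$, we have $Z'\subseteq Z$. For each $n$ we have $I^n\subseteq\mathfrak{p}^n$, so there are natural surjections $\mathcal{O}_X/I^n\to\mathcal{O}_X/\mathfrak{p}^n$. Passing to the limit gives a map of sheaves $\mathcal{O}_{X_{/Z}}\rest{Z'}\to\mathcal{O}_{X_{/Z'}}$ on $Z'$, compatible with the structure maps from $A$. Restricting $f$ along this map gives a section $f'\in H^0(U\cap Z',\mathcal{O}_{X_{/Z'}})$. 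By $\mathrm{SVT}(A,\mathfrak{p})$ and the Peskine--Szpiro criterion, $f'$ is the image of a unique $a\in A$. Put $g=f-a\in H^0(U\cap Z,\mathcal{O}_{X_{/Z}})$. By construction, the image of $g$ in $H^0(U\cap Z',\mathcal{O}_{X_{/Z'}})$ is $0$.

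\textbf{Step 2: show $g$ has zero stalk at a point of $Z'\cap U$.} The set $Z'\cap U$ is nonempty because $\dim A/\mathfrak{p}=2$. Pick $x\in Z'\cap U$; we want $g_x=0$ in $(\mathcal{O}_{X_{/Z}})_x$. Consider the composite
\[(\mathcal{O}_{X_{/Z}})_x\to(\mathcal{O}_{X_{/Z'}})_x\to\widehat{(\mathcal{O}_{X_{/Z'}})_x}\cong\widehat{\mathcal{O}_{X,x}},\]
where the last isomorphism is Lemma \ref{formallocalcomplete}. Applying Lemma \ref{formallocalcomplete} to $Z$ as well, the composite should agree with the completion map
\[(\mathcal{O}_{X_{/Z}})_x\to\widehat{(\mathcal{O}_{X_{/Z}})_x}\cong\widehat{\mathcal{O}_{X,x}}.\]
Both identifications come from the same inverse limit of the rings $(A/\mathfrak{q}^m)_x$, with $\mathfrak{q}$ the prime of $x$, so checking this compatibility is routine. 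The stalk $(\mathcal{O}_{X_{/Z}})_x$ is a Noetherian local ring (EGA 0, 7.6), so it injects into its completion. Hence $(\mathcal{O}_{X_{/Z}})_x\to(\mathcal{O}_{X_{/Z'}})_x$ is injective, and since $g$ maps to $0$ there, $g_x=0$.

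\textbf{Step 3: propagate and conclude.} $X=\Spec A$ is a complete regular local scheme, hence excellent and normal, and $U\cap Z$ is connected by hypothesis. Proposition \ref{StalksInjective} then says $H^0(U\cap Z,\mathcal{O}_{X_{/Z}})\to(\mathcal{O}_{X_{/Z}})_x$ is injective. Since $g_x=0$, we get $g=0$, so $f=a$ lies in the image of $A$. By the Peskine--Szpiro criterion, $\mathrm{cd}(A,I)\leq\dim A-2$.

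The main obstacle is Step 2: one must check carefully that the stalk map from completion along $Z$ to completion along $Z'$ is injective at points of $Z'\cap U$. The plan handles this by routing both stalks through the common completion $\widehat{\mathcal{O}_{X,x}}$ via Lemma \ref{formallocalcomplete}, which needs the naturality of that lemma's identification in the ideal. Everything else is formal bookkeeping with the criterion from \S2.
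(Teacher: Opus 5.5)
Your proposal is correct and is essentially the paper's argument. The paper proves that $\Gamma(\mathfrak{U},\mathcal{O}_{X_{/Z}})\to\Gamma(\mathfrak{U}',\mathcal{O}_{X_{/Z'}})$ is injective using the same diagram: Proposition \ref{StalksInjective} gives injectivity into stalks, and $(\mathcal{O}_{X_{/Z}})_x\to(\mathcal{O}_{X_{/Z'}})_x$ becomes an isomorphism on completions. It then deduces surjectivity of $A\to\Gamma(\mathfrak{U},\mathcal{O}_{X_{/Z}})$ from the factorization, which is exactly your element-wise $g=f-a$ argument, and your remark that the stalk is Noetherian local and so injects into its completion supplies a justification the paper leaves implicit.
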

\begin{proof}
    The canonical map $A\to \Gamma(\mathfrak{U}',\mathcal{O}_{X_{/Z'}})$ is an isomorphism where $\mathfrak{U}'$ is the formal open subscheme $Z'\setminus\{\mathfrak{m}\}$. We have a map of schemes $Z'\to Z$ inducing a map $\Gamma(\mathfrak{U},\mathcal{O}_{X_{/Z}})\to \Gamma(\mathfrak{U}',\mathcal{O}_{X_{/Z'}})$ where $\mathfrak{U}$ is the formal open subscheme $Z\setminus\{\mathfrak{m}\}$, and this map factors the map $A\to \Gamma(\mathfrak{U}',\mathcal{O}_{X_{/Z'}})$ via the canonical map $A\to \Gamma(\mathfrak{U},\mathcal{O}_{X_{/Z}})$. Thus, to show that the latter map is surjective, it would suffice to show that $\Gamma(\mathfrak{U},\mathcal{O}_{X_{/Z}})\to \Gamma(\mathfrak{U}',\mathcal{O}_{X_{/Z'}})$ is injective.

By Proposition \ref{StalksInjective} the map from $\Gamma(\mathfrak{U},\mathcal{O}_{X_{/Z}})\to (\mathcal{O}_{X_{/Z}})_x$ is injective for all $x$, and similarly for $\mathfrak{U}'$ and $\mathcal{O}_X{_{/Z'}}$. We also note that the map $(\mathcal{O}_{X_{/Z}})_x\to (\mathcal{O}_{X_{/Z'}})_x$ induces an isomorphism on completions whenever this expression makes sense, meaning we can form the following commutative diagram, picking an arbitrary $x\in \mathfrak{U}'$:
\[
\begin{tikzcd}
    \Gamma(\mathfrak{U},\mathcal{O}_{X_{/Z}})\arrow{r} \arrow[hookrightarrow]{d} & \Gamma(\mathfrak{U'},\mathcal{O}_{X_{/Z'}})\arrow[hookrightarrow]{d}\\
    (\mathcal{O}_{X_{/Z}})_x\arrow{r} \arrow[hookrightarrow]{d} & (\mathcal{O}_{X_{/Z'}})_x\arrow[hookrightarrow]{d}\\
    (\mathcal{O}_{X_{/Z}})^{\hat{}}_x \arrow{r}{\cong} &(\mathcal{O}_{X_{/Z'}})^{\hat{}}_x 
\end{tikzcd}
\]
This forces the horizontal arrows to all be injective, which shows that the canonical map $A\to \Gamma(\mathfrak{U},\mathcal{O}_{X_{/Z}})$ is an isomorphism, so that SVT$(A,I)$ also holds.
\end{proof}

(c.f \cite[Theorem 3.2]{zhangSecondVanishingTheorem2025} for a more algebraic proof of this fact.)

\section{Reduction to an Algebraically Closed Residue Field}\label{algclosed}
Suppose that $(A,\mathfrak{m},k)$ is a complete Noetherian local ring with $k$ separably closed. Let $I$ be a prime ideal such that $V(I)$ is dimension 2. Let $B$ be the integral closure of $A/I$, then note that as $A/I$ is still complete Noetherian local it is henselian, and thus $B$ is a finite product of local rings, but as it is an integral domain this implies that $B$ is local. $A/I\to B$ is a finite extension and so it is surjective on spectra, and $B$ has depth 2 as it is normal and dimension 2. Let $A'$ be a gonflement \cite[\href{https://stacks.math.columbia.edu/tag/03C3}{Tag 03C3}]{stacks-project} of $A$ with residue field $\overline{k}$, by this we mean that $(A,\mathfrak{m},k)\to (A',\mathfrak{n},\overline{k})$ is a faithfully flat map of local rings, where $\mathfrak{n} = \mathfrak{m}A'$. We can and will assume that $A'$ is Noetherian and complete. Indeed, replace $A'$ with $\varprojlim A'/\mathfrak{m}^nA'$, then \cite[\href{https://stacks.math.columbia.edu/tag/05GH}{Tag 05GH}]{stacks-project} implies it is Noetherian while the local criterion for flatness \cite[\href{https://stacks.math.columbia.edu/tag/0523}{Tag 0523}]{stacks-project} implies it is still flat. As the associated graded of $A'$ is obtained by tensoring the associated graded of $A$ with $\overline{k}$ we see that $A'$ is a regular local ring.

Consider the diagram
\[\begin{tikzcd}
    A'\arrow{r} & A'/IA'\arrow{r}{\text{finite}} & B \otimes_A A'\\
    A\arrow{u}{\text{faithfully flat}} \arrow{r} &  A/I\arrow{u}{\text{faithfully flat}} \arrow{r}{\text{finite}} & B\arrow{u}{\text{faithfully flat}}
\end{tikzcd}\]
As $A'/IA'$ is a complete Noetherian local ring we see that $B\otimes_A A'$ is a finite product of local rings, we wish to see that it is local. To do this, note that every maximal ideal of $B\otimes_A A'$ lies over $\mathfrak{m}$, and so we may mod out the entire diagram by $\mathfrak{m}$ to get the following:
\[\begin{tikzcd}
    \overline{k}\arrow{r} & B/\mathfrak{m}B \otimes_k \overline{k}\\
    k\arrow{u} \arrow{r}{\text{finite}} & B/\mathfrak{m}B\arrow{u}
\end{tikzcd}\]
Now as $B/\mathfrak{m}B$ is a finite type algebra over $k$, it is geometrically connected if and only if its base change to $k^{\mathrm{sep}}$ is connected, but as $k$ is separably closed this just reduces to asking if $B/\mathfrak{m}B$ is connected, which it is as it is just a single point. Thus $B/\mathfrak{m}B\otimes_k \overline{k}$ is connected, implying that it is also local as it is an Artinian ring. This shows that $B\otimes_A A'$ is local since all its maximal ideals lie over $\mathfrak{m}$, and its reduction mod $\mathfrak{m}$ is local. As it is a flat extension of $B$ it has depth $\geq 2$, and so by Hartshorne's connectedness lemma we have that $\Spec (B\otimes_A A')\setminus\{\eta\}$ is connected where $\eta$ is the closed point, but as $\Spec B\otimes_A A'\to \Spec A'/IA'$ is surjective and closed, $\Spec (B\otimes_A A')\setminus\{\eta\}$ will surject onto $\Spec (A'/IA')\setminus\{\mathfrak{n}\}$, and since the former is connected, so too is the latter.

Now if we wish to show that $H^k_I(A) = 0$ under the assumption that $\dim V(I) \geq 2$ and that $\Spec (A/I)\setminus\{\mathfrak{m}\}$ is connected, we may base change by the faithfully flat extension $A'$ to ask when $H^k_{IA'}(A') = 0$, and the conditions about $V(I)$ being 2-dimensional or more and $V(I)$ minus the closed point being connected will be preserved, so we may assume that $k$ is algebraically closed.

If in any case $IA'$ is no longer such that $A'/IA'$ is a 2-dimensional integral domain, then we can pick $\mathfrak{p}\supseteq IA'$ such that $A'/\mathfrak{p}$ is a 2-dimensional integral domain. Then Lemma \ref{surfacelemma} tells us that we need only show SVT$(A',\mathfrak{p})$.

\begin{remark}\label{MaComment}
    It was pointed out to us by Linquan Ma that the reduction to a normal surface is not necessary for the argument in \secref{resolution}. For details on this refer to Remark \ref{MaProof}.

    We have chosen to keep these sections as we believe the methods may be interesting in their own right.
\end{remark}

\section{Reduction to a Normal Surface}\label{normalsurface}
Suppose that SVT$(S,J)$ holds where $S$ is a complete regular local ring, and $S/J$ is a normal, dimension 2 ring. Let $(A,\mathfrak{m},k)$ be a complete regular local ring, with $k$ algebraically closed. Let $\mathfrak{p}$ be a prime ideal with $\dim A/\mathfrak{p} = 2$, and set $X = \Spec A,\ Z = V(\mathfrak{p}),\ U = X\setminus\{\mathfrak{m}\}$. Consider the ring $C$ which is the normalization of $A/\mathfrak{p}$, then this is a finite extension of $A/\mathfrak{p}$, and will thus be complete and local. Let $x_1,\dots,x_r$ be a generating set for $C$'s maximal ideal, then consider the map $A\llbracket T_1,\dots, T_r\rrbracket\to C$ defined by sending $T_i$ to $x_i$. By Nakayama's lemma this map is surjective, and so $Z' = \Spec C$ is a closed subscheme of $X' = \Spec A\llbracket T_1,\dots, T_r\rrbracket$ lying over $Z$ under the canonical map $X'\to X$. Let $\eta$ be the maximal ideal of $A\llbracket T_1,\dots, T_r\rrbracket$.

\begin{prop}\label{normalization}
    Let $U' = X'\setminus \{\eta\}$ and $U = X\setminus \{\mathfrak{m}\}$. Let $\mathfrak{U}' = U'\cap X_{/Z'}$ and $\mathfrak{U} = U\cap X_{/Z}$, then there is a canonical injection
    \[A\subseteq \Gamma(\mathfrak{U},\mathcal{O}_{X_{/Z}})\hookrightarrow \Gamma(\mathfrak{U}',\mathcal{O}_{X'_{/Z'}}) = A\llbracket T_1,\dots,T_r\rrbracket,\]
    such that $\Gamma(\mathfrak{U},\mathcal{O}_{X_{/Z}})$ is fixed by every continuous $A$-algebra automorphism of $A\llbracket T_1,\dots, T_r\rrbracket$ compatible with the surjection to $C$.
\end{prop}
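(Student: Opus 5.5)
The plan is to build the map geometrically from the morphism of formal schemes induced by $X'\to X$. Then I would identify the target using the hypothesis that SVT holds for normal surfaces, prove injectivity with Proposition \ref{StalksInjective} as in Lemma \ref{surfacelemma}, and get the invariance from functoriality.

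\emph{Step 1 (the map).} Let $\pi\colon X'=\Spec A\llbracket T_1,\dots,T_r\rrbracket\to X$ be the structure map and $J$ the kernel of $A\llbracket T_1,\dots,T_r\rrbracket\to C$. Since $\pi$ restricts to the finite surjection $Z'=\Spec C\to \Spec A/\mathfrak{p}=Z$, we have $\mathfrak{p}A\llbracket T\rrbracket\subseteq J$. Hence $\pi$ induces a morphism of formal schemes $X'_{/Z'}\to X_{/Z}$. The ring $C$ is local and finite over $A/\mathfrak{p}$, so the only point of $Z'$ over $\mathfrak{m}$ is $\eta$. Therefore $\pi$ maps $\mathfrak{U}'$ into $\mathfrak{U}$, and pulling back gives
\[\pi^*\colon\Gamma(\mathfrak{U},\mathcal{O}_{X_{/Z}})\to\Gamma(\mathfrak{U}',\mathcal{O}_{X'_{/Z'}}).\]
Its restriction to $A$ is the inclusion $A\subseteq A\llbracket T\rrbracket$.

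\emph{Step 2 (identify the target).} The ring $A\llbracket T_1,\dots,T_r\rrbracket$ is a complete regular local ring, and its quotient by $J$ is $C$, which is normal of dimension $2$. By the standing hypothesis SVT$(A\llbracket T\rrbracket,J)$ holds. The Peskine--Szpiro reformulation at the end of \S2 then says that the canonical map $A\llbracket T\rrbracket\to\Gamma(\mathfrak{U}',\mathcal{O}_{X'_{/Z'}})$ is an isomorphism. Naturality of this canonical map is what Step 4 will use.

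\emph{Step 3 (injectivity).} Let $y$ be the generic point of $Z$ and $x$ the generic point of $Z'$, so that $\pi(x)=y$. Since $Z$ is irreducible, $\mathfrak{U}$ is irreducible, hence connected. By Proposition \ref{StalksInjective} (applied to the excellent regular scheme $X$), $\Gamma(\mathfrak{U},\mathcal{O}_{X_{/Z}})\to(\mathcal{O}_{X_{/Z}})_y$ is injective. I would then complete, using Lemma \ref{formallocalcomplete} to identify the completed stalks with $\widehat{A_\mathfrak{p}}$ and $\widehat{A\llbracket T\rrbracket_J}$, and form the commutative square as in Lemma \ref{surfacelemma}. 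The map $A_\mathfrak{p}\to A\llbracket T\rrbracket_J$ is a flat local homomorphism, being a localization of the flat map $A\to A\llbracket T\rrbracket$, so it is faithfully flat. Consequently $\widehat{A_\mathfrak{p}}\to\widehat{A\llbracket T\rrbracket_J}$ is injective. The stalk $(\mathcal{O}_{X_{/Z}})_y$ injects into its completion because it is Noetherian local. Chasing the diagram shows $\pi^*$ is injective.

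\emph{Step 4 (invariance).} Let $\sigma$ be a continuous $A$-algebra automorphism of $A\llbracket T\rrbracket$ that is compatible with the surjection to $C$. Then $\sigma(J)=J$, and $\sigma$ induces an automorphism $s$ of $X'_{/Z'}$ which is the identity on the underlying space $Z'$ and preserves $\mathfrak{U}'$. Because $\sigma$ is $A$-linear, $\pi\circ s=\pi$ as morphisms of formal schemes, and hence $s^*\circ\pi^*=\pi^*$. Under the identification of Step 2, $s^*$ acts on $A\llbracket T\rrbracket$ by $\sigma$. This follows from naturality of $A\llbracket T\rrbracket\to\Gamma(\mathfrak{U}',\mathcal{O})$ with respect to ring maps preserving $J$, so the image of $\pi^*$ is fixed by $\sigma$.

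I expect the main obstacle to be the bookkeeping in Step 4: checking that $\sigma$ genuinely induces a morphism of formal schemes over $X_{/Z}$ whose action on sections matches $\sigma$ on $A\llbracket T\rrbracket$. Continuity together with $\sigma(J)=J$ should reduce this to the compatibility of the inverse systems $A\llbracket T\rrbracket_f/J^n$ under $\sigma$.
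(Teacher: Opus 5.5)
Your proposal is correct and follows the same route as the paper: the map comes from the morphism of completions induced by $A\to A\llbracket T_1,\dots,T_r\rrbracket$, injectivity is a diagram chase through (completed) stalks, and invariance holds because a compatible $A$-algebra automorphism induces an automorphism of $X'_{/Z'}$ over $X_{/Z}$. Your bookkeeping is somewhat cleaner than the paper's: working directly with formal spectra and the fact that $\eta$ is the only point of $Z'$ over $\mathfrak{m}$ removes the need for the auxiliary open $U''$, and your faithful-flatness argument justifies the injectivity of $\widehat{A_{\mathfrak p}}\to\widehat{A\llbracket T\rrbracket_J}$, which the paper only asserts.
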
 
\begin{proof}
    We construct the map as follows. The formal scheme $(\mathfrak{U}',\mathcal{O}_{X_{/Z'}}\rest{\mathfrak{U}'})$ is the completion of the scheme $U'$ along $Z'\cap U'$, but as $U'' = X'\setminus f^{-1}(\mathfrak{m})$ (where $f\colon X'\to X$) is an open subset of $U'$ also containing $Z'\cap U'$, it is also the completion of $U''$ along $Z'\cap U' = Z'\cap U''$. Then, we have a map $U''\to U$ which sends $Z'\cap U''$ into $Z\cap U$, inducing the desired map of formal schemes leading to the map described above.

    To see that the middle map is injective we can create the following diagram,
    \[
    \begin{tikzcd}
        \Gamma(\mathfrak{U},\mathcal{O}_{X_{/Z}}) \arrow{r} \arrow[hookrightarrow]{d} & \Gamma(\mathfrak{U}',\mathcal{O}_{X'_{/Z'}}) \arrow[hookrightarrow]{d}\\
        (\mathcal{O}_{X_{/Z}})_{f(x)}\arrow{r} \arrow[hookrightarrow]{d} & (\mathcal{O}_{X'_{/Z'}})_x \arrow[hookrightarrow]{d}\\
        (\mathcal{O}_{X_{/Z}})_{f(x)}^{\widehat{}} = \widehat{A}_{{\mathfrak{p}_x}\cap A} \arrow[hookrightarrow]{r} & (\mathcal{O}_{X'_{/Z'}})_{x}^{\widehat{}} = A\llbracket T_1,\dots, T_r\rrbracket_{\mathfrak{p}_x}^{\widehat{}}
    \end{tikzcd}
    \]
    which forces the horizontal arrows to be injective as well.

    We now show that $\Gamma(\mathfrak{U},\mathcal{O}_{X_{/Z}})$ has the property that it is fixed by every continuous $A$-algebra automorphism of $\Gamma(\mathfrak{U}',\mathcal{O}_{X'_{/Z'}})$ which is compatible with the projection to $C$. Indeed, such an automorphism will give rise to a diagram of the following form:
    \[
    \begin{tikzcd}
    &&& {U''} && \\
    {U''\cap Z'} &&&&& {U''} \\
    && {U''\cap Z'} & U \\
    {U \cap Z} &&&&& U \\
    && {U \cap Z}
    \arrow["\cong", from=1-4, to=2-6]
    \arrow[from=1-4, to=3-4]
    \arrow[from=2-1, to=1-4]
    \arrow[equals, from=2-1, to=3-3]
    \arrow[from=2-1, to=4-1]
    \arrow[from=2-6, to=4-6]
    \arrow[from=3-3, to=2-6]
    \arrow[from=3-3, to=5-3]
    \arrow[equals, from=3-4, to=4-6]
    \arrow[from=4-1, to=3-4]
    \arrow[equals, from=4-1, to=5-3]
    \arrow[from=5-3, to=4-6]
\end{tikzcd}
    \]
    To ensure that the map $X'\to X'$ descends to $U''\to U''$, one merely uses continuity to ensure that the fiber over $\mathfrak{m}$ is sent to itself. Such a diagram induces the identity map on $(\mathfrak{U},\mathcal{O}_{X_{/Z}})$.
\end{proof}

We then simply have to show the following,
\begin{lem}
    With notation as above, given an element $f$ of $A\llbracket T_1,\dots,T_r\rrbracket$ fixed by every continuous $A$-algebra automorphism of $A\llbracket T_1,\dots,T_r\rrbracket$ compatible with the surjection to $C$, $f$ lies in $A$.
\end{lem}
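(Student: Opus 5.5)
The plan is to produce explicit "translation" automorphisms compatible with the surjection $\pi\colon A\llbracket T_1,\dots,T_r\rrbracket\to C$. Fixedness under them will force every partial derivative $\partial f/\partial T_i$ to vanish. Since $A$ has characteristic $0$, this will force $f\in A$.

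\textbf{Step 1: translation automorphisms.} Let $K=\ker \pi$ and write $R = A\llbracket T_1,\dots,T_r\rrbracket$ with maximal ideal $\eta$. We may assume $r\geq 1$, since otherwise there is nothing to prove. Then $\dim R > \dim A\geq \dim C$, so $K\neq 0$. Choose a nonzero $g\in \eta^2K$; this is possible since $R$ is a domain.

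For each $s\in \mathfrak{m}$ and each index $i$, define a continuous $A$-algebra endomorphism $\varphi_{i,s}$ of $R$ by
\[
T_i\mapsto T_i+sg,\qquad T_j\mapsto T_j \ (j\neq i).
\]
This is well defined because $sg\in\eta$. It is an automorphism because its linear part modulo $\eta^2$ is the identity; equivalently, it has inverse $T_i\mapsto T_i-sg$ up to the usual formal inversion. It is compatible with $\pi$ because $\pi(T_i+sg)=\pi(T_i)$. By hypothesis, therefore, $\varphi_{i,s}(f)=f$ for every $s\in\mathfrak{m}$.

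\textbf{Step 2: extracting the derivative.} Introduce an auxiliary variable $S$ and set
\[
F(S)=f(T_1,\dots,T_i+gS,\dots,T_r)-f\in R\llbracket S\rrbracket.
\]
Taylor expansion with the integral divided-power operators $D_i^{(k)}$ (defined by $D_i^{(k)}T^\alpha=\binom{\alpha_i}{k}T^{\alpha-ke_i}$) gives
\[
F(S)=\sum_{k\geq1}S^kg^kD_i^{(k)}f.
\]
Hence $F(S)=S\,G(S)$ with $G(0)=g\,\partial f/\partial T_i$.

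Now take $s=p^n$, where $p$ is the residue characteristic, so $p\in\mathfrak{m}$. Step 1 gives $F(p^n)=0$. Since $R$ is a domain and $p^n\neq0$, this yields $G(p^n)=0$. Because $p^n\to0$ in the $\mathfrak{m}$-adic topology and evaluation is continuous, $G(0)=\lim_n G(p^n)=0$. As $g\neq 0$ and $R$ is a domain, $\partial f/\partial T_i=0$.

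I expect the main technical point to be exactly this step: writing the Taylor expansion integrally (hence the divided powers, since we cannot divide by $k!$) and justifying the continuity/evaluation argument. The convergence should be harmless because $g\in\eta$, but it needs to be stated carefully.

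\textbf{Step 3: conclusion.} Write $f=\sum_\alpha a_\alpha T^\alpha$ with $a_\alpha\in A$. Step 2 gives $\alpha_i a_\alpha=0$ for all $\alpha$ and all $i$. The ring $A$ is a regular local ring of mixed characteristic, hence a domain of characteristic $0$, so $\alpha_i a_\alpha = 0$ with $\alpha_i\neq 0$ forces $a_\alpha=0$. Thus $a_\alpha=0$ for every $\alpha\neq0$, and $f=a_0\in A$.

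Combined with Proposition \ref{normalization}, this shows $\Gamma(\mathfrak{U},\mathcal{O}_{X_{/Z}})=A$, which is the desired reduction.
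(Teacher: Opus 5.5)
Your argument is correct when $A$ has mixed characteristic, and it follows the paper's route. Both translate $T_i$ by an element of the kernel of the surjection to $C$, read off the first-order Taylor coefficient, and force $\partial f/\partial T_i=0$. The paper fixes $g\in\mathfrak p$ and replaces $g$ by $g^n$, concluding that $f'$ lies in $\bigcap_n (g^n)=0$. You instead vary the scalar $s$ and pass to a limit. Your choice $g\in\eta^2K$ also survives the degenerate case $\mathfrak p=0$, where the paper's choice gives nothing.

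\textbf{The gap is in generality.} The lemma lives in Section~5, whose setup is an arbitrary complete regular local ring with algebraically closed residue field. The paper stresses that nothing there depends on the characteristic, and it later invokes this section for regular local rings of positive characteristic (last theorem of Section~7). In that setting your Step~3 fails: $\partial f/\partial T_i=0$ only says that $f$ is a power series in $T_i^p$ over the remaining variables (e.g.\ $f=T_i^p$). The choice $s=p^n$ also breaks down, since $p=0$ in $A$ there, and in equal characteristic $0$ there is no residue characteristic $p$ at all. The paper handles characteristic $p$ with a separate iteration, showing that $f$ is a power series in $T_i^{p^k}$ for every $k$.

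\textbf{The repair is already contained in your divided-power expansion.} Pick any nonzero $t\in\mathfrak m$ (possible since $\dim A\ge 2$) and use $s=t^n$, then argue by induction on $k$.
\begin{enumerate}
\item Suppose $D_i^{(j)}f=0$ for $1\le j<k$. Then $F(S)=S^kG_k(S)$ with $G_k(0)=g^kD_i^{(k)}f$.
\item From $F(t^n)=t^{nk}G_k(t^n)=0$ and $R$ a domain, you get $G_k(t^n)=0$.
\item Since $G_k(t^n)-G_k(0)\in t^nR$, letting $n\to\infty$ gives $g^kD_i^{(k)}f=0$, hence $D_i^{(k)}f=0$.
\end{enumerate}
For $\beta$ with $\beta_i=0$, the coefficient of $T^\beta$ in $D_i^{(k)}f$ is exactly $\binom{k}{k}a_{\beta+ke_i}=a_{\beta+ke_i}$. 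So the vanishing of all $D_i^{(k)}f$, $k\ge 1$, kills every $a_\alpha$ with $\alpha_i>0$, in any characteristic. This replaces both your characteristic-$0$ Step~3 and the paper's separate $p^k$-iteration in characteristic $p$.
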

\begin{proof}
    Let $g$ be an element of $\mathfrak{p}$, then note that $g$ is also in the kernel of the projection to $C$. For a fixed $1\leq i\leq r$, consider $f$ as a power series in $T_i$ over the ring $A\llbracket T_1,\dots,\hat{T_i},\dots,T_r\rrbracket$. The map defined by sending $T_i$ to $T_i + g$ is a continuous $A$-algebra automorphism, with inverse $T_i\mapsto T_i - g$, compatible with the projection to $C$ which is even a $A\llbracket T_1,\dots,\hat{T_i},\dots,T_r\rrbracket$-algebra automorphism. We see that as a power series,
    \[f(T_i + g) = f(T_i) + gf'(T_i) + (g^2),\]
    and so if $f$ is fixed by this automorphism we see that $f'(T_i)$ is divisible by g. The same holds true when replacing $g$ by $g^n$, and so we conclude that $f'(T_i)$ is divisible by arbitrarily high powers of $g$, and so it must be zero. In characteristic 0 this implies that $f$ does not depend on $T_i$, and so $f$ must be a constant. In characteristic $p$, this implies that $f(T_i)$ is a power series in $T_i^p$, and so there exists a $h(T_i)$ where $h(T_i^p) = f(T_i)$. Now using the Taylor expansion for $h$ expanded about $T_i^p$ yields the equation
    \[h(T_i^p) = f(T_i) = f(T_i + g) = h(T_i^p + g^p) = h(T_i^p) + h'(T_i^p)g^p + (g^{2p})\]
    From this we conclude that multiplication by $g^p$ sends $h'(T_i^p)$ into $(g^{2p})$, and again this holds for all powers of $g$ so $h'(T_i^p) = 0$. This implies that $h'(T_i) = 0$ and again $h$ is a power series in $T_i^p$. Repeating this, we see $f$ is a power series in $T_i^{p^k}$ for arbitrarily high $k$, which is impossible unless it is a constant in $T_i$. As this is true for all $i$, we see that $f$ is a constant and thus lies in $A$ as desired.
\end{proof}
Going forth, let us now assume that $A/I$ is normal. It is also worth noting that until now nothing was dependent on the characteristic.

\section{A Resolution}\label{resolution}
Let $Z'\to Z$ be a resolution of $Z$ factoring through the blowup at $\mathfrak{m}$ (This always exists for excellent surfaces, c.f \cite[\href{https://stacks.math.columbia.edu/tag/0ADW}{Chapter 0ADW}]{stacks-project}), and $P = \BP^N_A$ a projective space into which $Z'$ embeds. Let $J$ be the ideal sheaf of $Z'$ in $P$, $s$ be the closed point of $X$, and $Z'_s$ denote the fiber of $Z'$ over $s$. By construction $Z'_s$ is an effective Cartier divisor.

We will proceed in two steps, first proving that $H^0(Z'\setminus Z'_s,\mathcal{O}_{P_{/Z'}}) = H^0(Z',\mathcal{O}_{P_{/Z'}})$, and then showing that $H^0(Z',\mathcal{O}_{P_{/Z'}}) = H^0(P,\mathcal{O}_P)$. If we know this, then note that we have a commutative diagram
\[
\begin{tikzcd}
    \Gamma(Z'\setminus Z'_s, \mathcal{O}_{P_{/Z}}) & \Gamma(Z'\setminus Z'_s, \mathcal{O}_{P_{/Z}}) \arrow[swap]{l}{\cong} & \Gamma(P,\mathcal{O}_P) = A \arrow[swap]{l}{\cong}\\
    \Gamma(Z\setminus\{\mathfrak{m}\},\mathcal{O}_{X_{/Z}})\arrow[hookrightarrow]{u} & \Gamma(Z,\mathcal{O}_{X_{/Z}})\arrow{l} & \Gamma(X,\mathcal{O}_X) = A \arrow[swap]{l}{\cong} \arrow{u}{\cong}
\end{tikzcd}
\]
which would force the restriction map $\Gamma(Z,\mathcal{O}_{X_{/Z}})\to \Gamma(Z\setminus\{\mathfrak{m}\},\mathcal{O}_{X_{/Z}})$ to be surjective. The leftmost vertical arrow is injective by the method in the proof of Proposition \ref{normalization}; by looking at stalks and then completions we will be looking at some map $\widehat{A}_{\mathfrak{p}\cap A}\to (A[x_0/x_j,\dots, x_n/x_j])_\mathfrak{p}^{\widehat{}}$ which is injective.

Now, Gabber's result \cite[6.1.5]{berthelotAlterationsVarietesAlgebriques1997} implies, because we are in ramified mixed characteristic and $Z'$ is a regular subscheme of the regular scheme $P$, that the vector bundle $\shom_{\mathcal{O}_P}(J/J^2, \mathcal{O}_{Z'_s})$ is globally generated.

Dually, we have an injection
\[J/J^2\rest{Z'_s}\hookrightarrow \bigoplus \mathcal{O}_{Z'_s}\]
which is locally split. Taking symmetric powers we obtain injections
\[J^r/J^{r+1}\rest{Z'_s}\hookrightarrow \bigoplus \mathcal{O}_{Z'_s}\]
which are again locally split.

\begin{prop}\label{lifting}
    If $H^0(Z'_s,\mathcal{O}_{Z'_s}(cZ'_s))= 0$ for all $c > 0$, then $H^0(Z'\setminus Z'_s,\mathcal{O}_{P_{/Z'}}) = H^0(Z',\mathcal{O}_{P_{/Z'}})$.
\end{prop}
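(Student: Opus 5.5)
The plan is to prove the statement one infinitesimal level at a time and then pass to the limit. Write $D = Z'_s$ for the effective Cartier divisor on $Z'$ and $V = Z'\setminus D$. Write $\mathcal{G}_n = \mathcal{O}_P/J^n$, a coherent sheaf supported on $Z'$. Since $\mathcal{O}_{P_{/Z'}} = \varprojlim_n \mathcal{G}_n$ and global sections commute with inverse limits, it suffices to show that each restriction map $H^0(Z',\mathcal{G}_n)\to H^0(V,\mathcal{G}_n)$ is an isomorphism, compatibly in $n$. Compatibility is automatic because these are restriction maps. By the exact sequence for local cohomology, this is equivalent to the vanishing $H^0_{D}(Z',\mathcal{G}_n) = H^1_{D}(Z',\mathcal{G}_n) = 0$.

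We handle this by induction on $n$ using the sequences
\[0\to J^{n-1}/J^n\to \mathcal{G}_n\to \mathcal{G}_{n-1}\to 0.\]
The long exact sequence of $H^\bullet_D$ shows that if $H^0_D$ and $H^1_D$ vanish for every graded piece $\mathcal{F}_r = J^r/J^{r+1}$, then they vanish for every $\mathcal{G}_n$. So we reduce to the locally free $\mathcal{O}_{Z'}$-modules $\mathcal{F}_r$; these are locally free because $Z'\subseteq P$ is a regular embedding of regular schemes.

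For such an $\mathcal{F}=\mathcal{F}_r$, the sheaf $\mathcal{H}^0_D(\mathcal{F})$ vanishes because $\mathcal{F}$ is locally free and $D$ is Cartier (a local nonzerodivisor). Consequently the local-to-global spectral sequence gives
\[H^0_D(Z',\mathcal{F}) = 0,\qquad H^1_D(Z',\mathcal{F}) = H^0\bigl(Z',\mathcal{H}^1_D(\mathcal{F})\bigr),\]
and
\[\mathcal{H}^1_D(\mathcal{F}) = \varinjlim_c \mathcal{F}(cD)/\mathcal{F}.\]
Since $Z'$ is Noetherian, $H^0$ commutes with this filtered colimit. It therefore suffices to show $H^0(Z',\mathcal{F}(cD)/\mathcal{F}) = 0$ for all $c>0$. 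The sheaf $\mathcal{F}(cD)/\mathcal{F}$ has a finite filtration with graded pieces $\mathcal{F}(jD)\rest{D}$ for $1\le j\le c$. So by left exactness of $H^0$ it is enough to prove
\[H^0\bigl(D,\mathcal{F}\rest{D}\otimes\mathcal{O}_{D}(jD)\bigr) = 0 \quad\text{for all } j>0.\]

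This last vanishing is where Gabber's global generation enters. Twisting the locally split injection $J^r/J^{r+1}\rest{Z'_s}\hookrightarrow \bigoplus \mathcal{O}_{Z'_s}$ by the line bundle $\mathcal{O}_{Z'_s}(jZ'_s)$ gives an injection
\[\mathcal{F}_r\rest{D}(jD)\hookrightarrow \bigoplus \mathcal{O}_{Z'_s}(jZ'_s).\]
Taking $H^0$, the left side injects into $\bigoplus H^0(Z'_s,\mathcal{O}_{Z'_s}(jZ'_s))$, which vanishes by hypothesis. The $r=0$ case, $\mathcal{F}_0=\mathcal{O}_{Z'}$, is directly the hypothesis.

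I expect the main subtlety to be the bookkeeping rather than any hard estimate. One must check that the direct sum in the injection is finite, and that the symmetric-power injections are genuinely injective after restriction to $D$, which follows from local splitness. One must also check that $\mathcal{H}^1_D$ of a locally free sheaf is really the colimit of $\mathcal{F}(cD)/\mathcal{F}$; this is where $D$ being Cartier is used, guaranteed by the resolution factoring through the blowup at $\mathfrak{m}$. Finally, passing $H^0$ through both the inverse limit over $n$ and the direct limit over $c$ should be harmless but deserves a line each.
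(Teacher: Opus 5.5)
Your proof is correct. It uses the same ingredients as the paper: the filtration of $\mathcal{O}_P/J^n$ by the locally free $J^r/J^{r+1}$, twisting by multiples of $Z'_s$, and the locally split embedding $J^r/J^{r+1}|_{Z'_s}\hookrightarrow\bigoplus\mathcal{O}_{Z'_s}$. The d\'evissage, however, runs in the opposite order.

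The paper works with $\mathcal{O}_P/J^n$ directly. It asserts that $Z'_s$ lifts to an effective Cartier divisor on each thickening and applies the Goodman--Hartshorne formula $\Gamma(Z'\setminus Z'_s,\mathcal{O}_P/J^n)=\varinjlim_c\Gamma(Z',(\mathcal{O}_P/J^n)(cZ'_s))$. It then uses the twisting sequences to reduce to $H^0$ of $(\mathcal{O}_P/J^n)|_{Z'_s}(cZ'_s)$, and only at the end filters by the $J^r/J^{r+1}$.

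You filter first and twist only the graded pieces. These are $\mathcal{O}_{Z'}$-modules, on which $Z'_s$ is honestly Cartier. The cost is that an isomorphism on $H^0$ alone does not propagate through the extensions $0\to J^{n-1}/J^n\to\mathcal{O}_P/J^n\to\mathcal{O}_P/J^{n-1}\to 0$. So you must prove the stronger vanishing of $H^1_{Z'_s}$ of each graded piece. You get this from $\mathcal{H}^0_{Z'_s}=0$, the local-to-global spectral sequence, and $\mathcal{H}^1_{Z'_s}(\mathcal{F})=\varinjlim_c\mathcal{F}(cZ'_s)/\mathcal{F}$.

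The gain is that you never need a global Cartier divisor on the non-reduced thickenings $\mathcal{O}_P/J^n$. The paper justifies that lift only by a local nonzerodivisor argument. Gluing local lifts of an equation of $Z'_s$ into an invertible ideal of $\mathcal{O}_P/J^n$ is a genuine deformation problem. At first order its obstruction lies in $H^1(Z'_s,(J/J^2)|_{Z'_s}(Z'_s))$, which the hypotheses do not obviously control. Your ordering is therefore the more robust one.

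One minor point: the restriction map being an isomorphism is implied by, not equivalent to, $H^0_{D}=H^1_{D}=0$. It is equivalent to $H^0_D=0$ together with injectivity of $H^1_D\to H^1$. Since you actually prove the vanishing, this does not affect the argument.
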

\begin{proof}
As $Z'$ is a regular scheme embedded in the regular scheme $P$, $J$ is locally generated by a regular sequence so that $J/J^2$ is locally free. This implies that for all $k$, $J^k/J^{k+1}$ are locally free, so any local section of $\mathcal{O}_P$ which is not a zero-divisor on $\mathcal{O}_P/J = \mathcal{O}_Z$ remains a non-zero divisor on $J^k/J^{k+1}$ for all $k$. This in turn implies that these are non zero-divisors on $\mathcal{O}_P/J^n$ for all $n$ by using the filtration by $J^k$. This means that effective Cartier divisors $D$ on $Z$, such as $Z'_s$, lift to effective Cartier divisors on $\mathcal{O}_P/J^n$ which by abuse of notation we will also denote $D$.

As $Z'_s$ is an effective Cartier divisor on $\mathcal{O}_P/J^n$ for each $n$, we have the following relationship by \cite[Lemma 4]{goodmanSchemesFinitedimensionalCohomology1969}:
\[\Gamma(Z'\setminus Z'_s,\mathcal{O}_P/J^n) = \varinjlim_c \Gamma(Z',(\mathcal{O}_P/J^n)(cZ'_s)).\]
We also have the following short exact sequence,
\[0\to (\mathcal{O}_P/J^n)(cZ'_s)\to (\mathcal{O}_P/J^n)((c+1)Z'_s)\to (\mathcal{O}_P/J^n)\rest{Z'_s}((c+1)Z'_s)\to 0,\]
and so if we can show that $H^0(Z'_s,(\mathcal{O}_P/J^n)\rest{Z'_s}(cZ'_s)) = 0$ for all $c > 0$ all the groups $\Gamma(Z',(\mathcal{O}_P/J^n)(cZ'_s))$ are equal to $\Gamma(Z',\mathcal{O}_p/J^n)$, and all functions lift over $Z'_s$. As this holds for each $n$, it will hold in the limit.

Note that $(\mathcal{O}_P/J^n)\rest{Z'_s}(cZ'_s)$ has a filtration by things of the form $(J^r/J^{r+1})\rest{Z'_s}(cZ'_s)$, and this sheaf is a subsheaf of a direct sum of sheaves of the form $\mathcal{O}_{Z'_s}(cZ'_s)$, so in particular it suffices to show that $H^0(Z'_s,\mathcal{O}_{Z'_s}(cZ'_s)) = 0$ for $c > 0$.
\end{proof}

Now the Hodge Index Theorem \cite[Lemma 14.1]{lipmanRationalSingularitiesApplications1969}, which Lipman attributes to Du Val, implies that the intersection matrix of the components of $(Z'_s)_{\mathrm{red}}$ is negative definite, which implies that the self intersection of any divisor set-theoretically supported on $Z'_s$ is negative. In addition, note that $Z'_s$ is antinef due to its dual $\mathfrak{m}\mathcal{O}_{Z'}$ being globally generated.

\begin{lem}
    If $Z$ is the spectrum of a normal, local 2-dimensional ring with closed point $s$ and $Z'\to Z$ is a resolution of $Z$, then $H^0(Z'_s,\mathcal{O}_{Z'_s}(cZ'_s)) = 0$ for all $c > 0$.
\end{lem}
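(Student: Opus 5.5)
Write $E = Z'_s$, which is an effective Cartier divisor on the regular surface $Z'$, supported on the exceptional curves $E_1,\dots,E_m$. Write $E=\sum a_i E_i$ with $a_i\geq 1$. The line bundle $L_c=\mathcal{O}_{E}(cE)$ on the one-dimensional projective $k$-scheme $E$ restricts to each $E_i$ with degree $cE\cdot E_i$. The remark just before the lemma says $E$ is antinef, so $E\cdot E_i\leq 0$ for every $i$. The Hodge index theorem (Lipman, Lemma 14.1) says $E^2<0$, so $E\cdot E_j<0$ for at least one $j$. The plan is to show that a section of $L_c$ cannot survive under these degree constraints, using a filtration of $\mathcal{O}_E$ by the reduced components.

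First I would choose a composition series of the divisor $E$: a sequence $0=D_0<D_1<\dots<D_n=E$ of effective divisors with $D_{k}-D_{k-1}=E_{i_k}$ a single component. For each $k$ there is an exact sequence
\[0\to \mathcal{O}_{E_{i_k}}(-D_{k-1})\to \mathcal{O}_{D_k}\to \mathcal{O}_{D_{k-1}}\to 0.\]
Tensoring with $\mathcal{O}(cE)$ and inducting, $H^0(E,L_c)=0$ follows once $H^0\bigl(E_{i_k},\mathcal{O}_{E_{i_k}}(cE-D_{k-1})\bigr)=0$ for every $k$. Each $E_i$ is an integral projective curve over $k$, so it suffices that $\deg_{E_{i_k}}(cE-D_{k-1})<0$, or $\leq 0$ with the bundle nontrivial. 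Thus the question becomes one of choosing the order of the components well.

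The main obstacle is choosing this ordering, and I would use the standard Laufer/Lipman-type argument. Build the series in reverse, peeling components off $E$: set $F_n=E$, and given $F_k$ choose a component $E_i\leq F_k$ with $(cE-F_k+E_i)\cdot E_i<0$, i.e. $(F_k-E_i)\cdot E_i>cE\cdot E_i$. Since $cE\cdot E_i\leq 0$, it is enough to find $E_i$ in the support of $F_k$ with $(F_k-E_i)\cdot E_i>0$, or with $(F_k-E_i)\cdot E_i\ge 0$ when $E\cdot E_i<0$. Negative definiteness alone does not guarantee this, so I expect to work directly with the negative definite form instead. Write $G=cE-F_k$, an effective-or-zero correction of $E$, and argue by contradiction. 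Suppose every component $E_i$ of $F_k$ satisfies $\deg_{E_i}(G+E_i)\geq 0$. If a nonzero section existed, its vanishing divisor would satisfy the antinef-type inequalities, and negative definiteness would then force it to be zero.

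As a fallback I would argue more cleanly using the positivity of $-E$. Set $M=\mathcal{O}_{Z'}(-E)=\mathfrak{m}\mathcal{O}_{Z'}$, which is globally generated. Then $L_c=M^{-c}|_E$, and $M|_E$ is globally generated and nontrivial on the connected curve $E_{\mathrm{red}}$, where connectedness comes from Zariski's main theorem by normality of $Z$. A generic section of $M$ defines a finite morphism from $E$ to a curve, and a nonzero section of $M^{-c}|_E$ would give a nonzero map $M^{c}|_E\to\mathcal{O}_E$. I would try to rule this out by restricting to each component with $\deg M|_{E_i}>0$, where the map vanishes. By generization it would then vanish on every component meeting those, and by connectedness on all of $E$, handling the non-reduced structure via the filtration above.
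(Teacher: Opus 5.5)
Your framework matches the paper's. You take a composition series $0=D_0<D_1<\dots<D_n=E$ with $C_k=D_k-D_{k-1}$, use the sequences $0\to\mathcal{O}_{C_k}(-D_{k-1})\to\mathcal{O}_{D_k}\to\mathcal{O}_{D_{k-1}}\to 0$ twisted by $cE$, and reduce to $(cE-D_{k-1})\cdot C_k<0$ for every $k$. However, the step you yourself call the main obstacle, choosing the order, is never carried out, and the route you sketch does not work.

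Peeling components off the top is the wrong direction. Your local condition $(cE-F_k+E_i)\cdot E_i<0$ always holds at the first reverse step, since it equals $(c-1)E\cdot E_i+E_i^2$. The binding constraint sits at the bottom: the last surviving component must satisfy $E\cdot C_1<0$, and a greedy choice can lead to a dead end. For instance, blow up the closed point of $\Spec k\llbracket x,y\rrbracket$ and then a point of the exceptional curve. You get $E=A+B$ with $A^2=-2$, $B^2=-1$, $A\cdot B=1$, so $E\cdot A=-1$ and $E\cdot B=0$. Peeling off $A$ first satisfies your condition, and even your sufficient criterion $(F_k-E_i)\cdot E_i>0$. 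But it leaves $F_1=B$, and $\mathcal{O}_B(cE)\cong\mathcal{O}_B$ has a nonzero section. So the supposition in your proposed contradiction argument can genuinely hold, and nothing contradicts it. The sentence about the ``vanishing divisor'' of a section is not an argument.

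The missing idea is to build the series from the bottom and split $cE-D_{k-1}=(c-1)E+R$, where $R=E-D_{k-1}$ is a nonzero effective divisor supported on the exceptional curves.
\begin{itemize}
\item Write $R=\sum_j r_jE_j$. Negative definiteness gives $R^2=\sum_j r_j\,(E_j\cdot R)<0$, so some $E_j$ with $r_j>0$ has $E_j\cdot R<0$; take $C_k=E_j$.
\item Then $\deg_{C_k}(cE-D_{k-1})=(c-1)\,E\cdot C_k+R\cdot C_k<0$ by antinefness.
\item Since $R$ only shrinks, the process never gets stuck. At $k=1$ it is just the choice of $C_1$ with $C_1\cdot E<0$.
\end{itemize}
This is exactly the paper's proof.

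Your fallback does not replace this step. The connectedness argument does handle sections on $E_{\mathrm{red}}$. But you have no independent reason for $H^0(E,\mathcal{O}_E(cE))\to H^0(E_{\mathrm{red}},\mathcal{O}_{E_{\mathrm{red}}}(cE))$ to be injective. Proving that requires killing the kernel pieces $\mathcal{O}_{C_k}(cE-D_{k-1})$, which is precisely the ordering problem you left open.
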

\begin{proof}
Write $Z'_s = F_1 + \dots + F_n$, where each $F_i$ is an integral curve (allowing duplicates), and let $S$ be the multiset $\{F_i\}$. For ease of writing, set $F = Z'_s$.

By negative-definiteness, $F^2 < 0$, but if we expand it out we have $F^2 = \sum_1^n F_i\cdot F$, and so one $F_i\cdot F$ must be negative. Let $C_1$ be one such curve such that $C_1\cdot F < 0$, and set $D_1 = C_1$.

Note that because we have an integral curve $D_1 = C_1$ intersecting $F$ negatively, we have $H^0(\mathcal{O}_{D_1}(cF))$ = 0.

Assume that we have chosen $C_1,\dots,C_i$ such that for all $1\leq j\leq i - 1, C_{j+1} \in S\setminus\{C_1,\dots,C_j\}$, $D_j = \sum_1^j C_k$, and $H^0(\mathcal{O}_{D_j}(cF))$ = 0.

For $1\leq j\leq i$, relabel the $F_j$ so that $F_j = C_j$. Then, $(F_{i+1} + \dots + F_n)^2 < 0$, again by negative definiteness, but again we can write this intersection as $\sum_{i+1}^n F_j\cdot(F_{i+1} + \dots + F_n)$, so some $F_j\cdot(F_{i+1} + \dots + F_n) = F_j(F - D_i)$ is negative. Let $C_{i+1}$ be one such $F_j$, and then set $D_{i+1} = D_i + C_{i+1}$.

By \cite[\href{https://stacks.math.columbia.edu/tag/0C4T}{Tag 0C4T}]{stacks-project} we have the following short exact sequence,

\[0\to \mathcal{O}_{C_{i+1}}(-D_i)\to \mathcal{O}_{D_{i+1}}\to \mathcal{O}_{D_i}\to 0.\] 

By twisting by $cF$ we get that

\[0\to \mathcal{O}_{C_{i+1}}((c-1)F + F - D_i)\to \mathcal{O}_{D_{i+1}}(cF)\to \mathcal{O}_{D_i}(cF)\to 0\]  
is exact, and since $C_{i+1}$ intersects $F - D_i$ negatively and $(c-1)F$ non-positively as $F$ is anti-nef the first term has no global sections. When combined with our inductive hypothesis we have $H^0(\mathcal{O}_{D_{i+1}}(cF)) = 0$.

Letting $i = n$, since we have $D_n = F$, we conclude that $H^0(\mathcal{O}_{F}(cF)) = 0$, but $F$ was just $Z'_s$.
\end{proof}

We thus can conclude that
\[\Gamma(Z'\setminus Z'_s,\mathcal{O}_{P_{/Z'}}) =\varprojlim \Gamma(Z'\setminus Z'_s, \mathcal{O}_P/J^n) = \varprojlim\Gamma(Z',\mathcal{O}_P/J^n) = \Gamma(Z',\mathcal{O}_{P_{/Z'}}).\]

Our next goal will be to show that any formal function of $P$ along $Z'$ actually comes from a global function on $P$. Denote $E = \BP^N_{\kappa(\mathfrak{m})}$, and $E_n = \BP^N_{A/\mathfrak{m}^n}$ the $n$-th infinitesimal neighborhood of $E$ inside of $P$. We first show that
\[\Gamma((E_n)_{/E_n\cap Z'},\mathcal{O}_{(E_n)_{/E_n\cap Z'}}) = A/\mathfrak{m}^n.\]

Now for each $n$, we obtain a functorial map $\varphi_n\colon \Gamma(E_n,\mathcal{F})\to\Gamma((E_n)_{/E_n\cap Z'},\mathcal{F}_{/E_n\cap Z'})$ for any quasicoherent module $\mathcal{F}$. Noting that $E_1\cap Z'\subseteq\dots E_n\cap Z'$ are infinitesimal thickenings of each other within $E_n$, we note that we may as well complete with respect to any other $E_k\cap Z'$ for $1\leq k < n$. Noting that for $\mathcal{F} = \mathcal{O}_{E_n}$ the domain of $\varphi_n$ is $A/\mathfrak{m}^n$, our goal is to show that $\varphi_n$ is an isomorphism for these choices of $\mathcal{F}$, which we will do inductively.

\begin{prop}\label{thick-sections}
    For $\mathcal{F} = \mathcal{O}_{E_n}$, the map $\varphi_n$ is an isomorphism. Therefore, $\Gamma((E_n)_{/E_n\cap Z'},\mathcal{O}_{(E_n)_{/E_n\cap Z'}}) = \Gamma(E_n,\mathcal{O}_{E_n}) = A/\mathfrak{m}^n$.
\end{prop}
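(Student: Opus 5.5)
Induct on $n$, using the short exact sequences relating $E_n$ to $E_{n-1}$ and the base case $E_1 = E = \BP^N_k$.

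\textbf{Base case.} For $n=1$ we need $\Gamma(E_{/E\cap Z'},\mathcal{O}) = k$. Here $E\cap Z' = Z'_s$ is a connected, proper, one-dimensional subscheme of $\BP^N_k$; it is connected by Zariski's main theorem, since $Z$ is normal and local. For a connected positive-dimensional closed subscheme $Y$ of projective space over an algebraically closed field, Hartshorne's theorem says $\BP^N_k$ is G2 along $Y$, and in particular formal functions along $Y$ are constant, so $\Gamma(E_{/Y},\mathcal{O}) = k$. One route is the Hartshorne–Speiser style argument: formal functions along $Y$ form a field finite over $k$ because $Y$ meets every hypersurface and $H^0$ of the completion is finite over $k$, hence equal to $k$. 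I would cite this (Hartshorne, \emph{Ample Subvarieties}, Ch.~V) rather than reprove it.

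\textbf{Inductive step.} Write $\mathcal{I} = \mathfrak{m}\mathcal{O}_P$, so that $0\to \mathcal{I}^{n-1}/\mathcal{I}^n \to \mathcal{O}_{E_n}\to \mathcal{O}_{E_{n-1}}\to 0$ is exact. Since $A$ is regular, $\mathcal{I}^{n-1}/\mathcal{I}^n \cong (\mathfrak{m}^{n-1}/\mathfrak{m}^n)\otimes_k \mathcal{O}_E$ is a free $\mathcal{O}_E$-module of finite rank. Completion along $E_n\cap Z'$ is exact on coherent sheaves; by the remark before the proposition we may complete along $Z'_s$. Using Lemma~\ref{restrict-complete}, taking global sections gives a map of left exact sequences
\[0\to (\mathfrak{m}^{n-1}/\mathfrak{m}^n)\otimes_k\Gamma(E_{/Z'_s},\mathcal{O})\to \Gamma((E_n)_{/\cdot},\mathcal{O})\to\Gamma((E_{n-1})_{/\cdot},\mathcal{O}),\]
which receives the exact sequence $0\to \mathfrak{m}^{n-1}/\mathfrak{m}^n\to A/\mathfrak{m}^n\to A/\mathfrak{m}^{n-1}\to 0$ via the maps $\varphi$. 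By the base case and induction, the outer maps are isomorphisms. The five lemma (the four-lemma for injectivity plus a diagram chase for surjectivity, where surjectivity of the top row onto $\Gamma(E_{n-1},\cdot)$ supplies the lift) then shows $\varphi_n$ is an isomorphism.

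\textbf{Main obstacle.} The key step is the base case, namely that formal functions on $\BP^N_k$ along the connected curve $Z'_s$ are only constants. I would make sure this citation applies in the needed generality: $Z'_s$ may be non-reduced, but completing along it equals completing along its reduction, which is connected and one-dimensional. The inductive step also silently uses that completion commutes with taking $\Gamma$ of the graded pieces, which holds because those pieces are finite free and completion is additive.
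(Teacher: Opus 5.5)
Your proposal is correct and follows essentially the same route as the paper. The base case uses connectedness of $Z'_s$ (via Zariski/Stein factorization) together with Hartshorne's Theorem V.3.1, and the induction on $n$ uses $0\to \mathfrak{m}^{n-1}\mathcal{O}_{E_n}\to\mathcal{O}_{E_n}\to\mathcal{O}_{E_{n-1}}\to 0$, with the first term a free $\mathcal{O}_{E_1}$-module, followed by the five lemma. Your use of the surjectivity of $A/\mathfrak{m}^n\to A/\mathfrak{m}^{n-1}$ in the diagram chase is just a rephrasing of the paper's observation that $H^1(E_1,\bigoplus\mathcal{O}_{E_1})=0$.
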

\begin{proof}
For $n = 1$, we note that $E_1 = P^N_{\kappa(\mathfrak{m})}$ and $E_1\cap Z'$ is a connected curve as $Z$ is normal (Stein Factorization), and so \cite[Theorem V.3.1]{hartshorneAmpleSubvarietiesAlgebraic1970} implies that $\varphi_1$ is an isomorphism.

Proceeding by induction, we assume that $\varphi_i$ is an isomorphism for $i < n$. We can write the following exact sequence,
\[0\to \mathfrak{m}^{n-1}\mathcal{O}_{E_n}\to \mathcal{O}_{E_n}\to \mathcal{O}_{E_{n-1}}\to 0.\]
Observe that $\mathfrak{m}^{n-1}\mathcal{O}_{E_n}$ is supported on $E_1$, and when restricted to $E_1$ is isomorphic to a direct sum of copies of the structure sheaf, given by the degree $n-1$ monomials in a set of regular system of parameters of $\mathfrak{m}$.
Noting further that $\mathcal{O}_{E_{n-1}}$ is supported on $E_{n-1}$, by replacing the subscheme we are completing at by the corresponding smaller subscheme and then using Lemma \ref{restrict-complete}, we obtain the following map between exact sequences
\[
\begin{tikzcd}[column sep = 0.6cm]
    0\arrow{r}\arrow{d} & \Gamma(E_1,\bigoplus\mathcal{O}_{E_1}\arrow{r}\arrow{d}{\varphi_1}) & \Gamma(E_n,\mathcal{O}_{E_n})\arrow{r}\arrow{d}{\varphi_n} & \Gamma(E_{n-1},\mathcal{O}_{E_{n-1}})\arrow{r}\arrow{d}{\varphi_{n-1}}\arrow{r} & H^1(E_1,\bigoplus\mathcal{O}_{E_1})\arrow{d}\\
    0\arrow{r} & \Gamma(\widehat{E_1},\widehat{\bigoplus\mathcal{O}_{E_1}})\arrow{r} & \Gamma(\widehat{E_n},\widehat{\mathcal{O}_{E_n}})\arrow{r} & \Gamma(\widehat{E_{n-1}},\widehat{\mathcal{O}_{E_{n-1}}})\arrow{r} & ?
\end{tikzcd}
\]
where it does not matter what is in the bottom right corner. The first, second, and fourth maps are isomorphisms by induction, and the fifth map is injective as the cohomology group vanishes, and so by the five lemma $\varphi_n$ is an isomorphism.
\end{proof}

\begin{prop}
    With notation like before, $\Gamma(P_{/Z'},\mathcal{O}_{P_{/Z'}})=\Gamma(P,\mathcal{O}_P)$.
\end{prop}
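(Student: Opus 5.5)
We will deduce the statement from Proposition \ref{thick-sections} by a limit argument along the thickenings $E_n$, using Lemma \ref{sheaf thickenings}. Apply Lemma \ref{sheaf thickenings} on $X=P$ with $W = E = E_1$ and $Z$ replaced by $Z'$. Then $W_n = E_n$, and
\[\varprojlim_n \mathcal{O}_{(E_n)_{/E_n\cap Z'}} = \mathcal{O}_{P_{/E\cap Z'}}.\]
Since $\Gamma$ commutes with inverse limits of sheaves, this gives
\[\Gamma(P_{/E\cap Z'},\mathcal{O}_{P_{/E\cap Z'}}) = \varprojlim_n \Gamma((E_n)_{/E_n\cap Z'},\mathcal{O}) = \varprojlim_n A/\mathfrak{m}^n = A,\]
where the middle equality is Proposition \ref{thick-sections}. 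The last equality uses that $A$ is complete. The transition maps are the obvious surjections, because the $\varphi_n$ are functorial and compatible with restriction $E_n\to E_{n-1}$.

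Next we pass from completion along $E\cap Z'$ to completion along $Z'$. The subscheme $E\cap Z' = Z'_s$ is closed in $Z'$, so restriction gives a map
\[\Gamma(P_{/Z'},\mathcal{O}_{P_{/Z'}})\to \Gamma(P_{/Z'_s},\mathcal{O}_{P_{/Z'_s}}) = A.\]
This map sends $(f_n)$ to its further completion along $\mathfrak{m}$; by Lemma \ref{completions}, completing $\mathcal{O}_P/J^n$ along $\mathfrak{m}$ is the same as completing $\mathcal{O}_P$ along $J+\mathfrak{m}\mathcal{O}_P$. The composite $A=\Gamma(P,\mathcal{O}_P)\to \Gamma(P_{/Z'},\mathcal{O}_{P_{/Z'}})\to A$ is the identity. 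It therefore suffices to show that the restriction map is injective.

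\textbf{Injectivity (the main obstacle).} Work with each $\Gamma(Z',\mathcal{O}_P/J^n)$, or directly with a formal function $f$ on $P_{/Z'}$ whose image along $Z'_s$ vanishes. By Lemma \ref{formallocalcomplete}, $f$ then vanishes in the completed stalk $\widehat{\mathcal{O}_{P,x}}$ for every $x\in Z'_s$. The stalk injects into its completion, so $f_x=0$ for all $x\in Z'_s$. The point $s$ is the unique closed point of $\Spec A$ and $Z'\to Z$ is proper, so every point of $Z'$ specializes to a point of $Z'_s$. In particular, every irreducible component of $Z'$ meets $Z'_s$.

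For the injectivity of generization maps on stalks we use Lemma \ref{formalgenerization}, applied to the regular (hence excellent normal) scheme $P$. Along a specialization $x\rightsquigarrow y$ with $y\in Z'_s$, the vanishing $f_y=0$ then forces $f_x=0$. Hence $f_x=0$ for every point $x$ of $Z'$, which gives $f=0$. This is essentially the argument of Proposition \ref{StalksInjective}, except that connectedness is replaced by the fact that every point specializes into $Z'_s$.

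The delicate points are two. First, one must check that Lemma \ref{formalgenerization} applies for completion along the possibly non-reduced subscheme cut out by $J$. Second, one must check that the identification of the $\mathfrak{m}$-adic completion with $A$ matches the maps $\varphi_n$. Once injectivity is established, $\Gamma(P_{/Z'},\mathcal{O}_{P_{/Z'}})\cong A=\Gamma(P,\mathcal{O}_P)$, which is the statement.
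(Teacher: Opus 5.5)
Your proposal is correct and follows the paper's proof. Like the paper, you identify $\Gamma(P_{/Z'_s},\mathcal{O}_{P_{/Z'_s}})$ with $A$ via Lemma \ref{sheaf thickenings} and Proposition \ref{thick-sections}, then show that restriction from $P_{/Z'}$ to $P_{/Z'_s}$ is injective by passing to stalks and their completions. The only variation is in that injectivity step: the paper reuses the connectedness argument of Proposition \ref{StalksInjective} at a single point, while you use vanishing at every point of $Z'_s$ together with properness of $Z'\to Z$. In your version the implication $f_y=0\Rightarrow f_x=0$ for a generization $x$ of $y$ holds for any sheaf, so Lemma \ref{formalgenerization} (and your first ``delicate point'') is not actually needed.
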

\begin{proof}

Using Lemma \ref{sheaf thickenings} and Proposition \ref{thick-sections}, we see that
\[\Gamma(P_{/E\cap Z'},\mathcal{O}_{E_{/E\cap Z'}}) = \varprojlim \Gamma((E_n)_{/E_n\cap Z'}, \mathcal{O}_{(E_n)_{/E_n\cap Z'}}) = \varprojlim A/\mathfrak{m}^n = A = \Gamma(P,\mathcal{O}_P).\]

Thus, given $s\in \Gamma(P_{/Z'},\mathcal{O}_{P_{/Z'}})$, there exists a function $s'\in \Gamma(P,\mathcal{O}_P)$ whose image agrees with $s$ at all points in $Z'\cap E$, but this is enough to say it agrees at all points. Indeed, like in the proof of \secref{reduction-to-surface}, the map $\Gamma(P_{/Z'},\mathcal{O}_{P_{/Z'}})\to \Gamma(P_{/Z'\cap E},\mathcal{O}_{/Z'\cap E})$ is injective, so it is enough to check that they agree after restricting.
\end{proof}

With this, the SVT has been proven for all pairs $(A,I)$ where $A$ is a ramified regular local ring, and as a result we obtain the following:
\maintheorem*

\begin{remark}\label{equichar}
    In equicharacteristic the normal bundle $\shom_{\mathcal{O}_P}(J/J^2,\mathcal{O}_{Z'_s})$ will be globally generated since the tangent bundle of $\mathcal{O}_P$ is globally generated by the Euler sequence and then we may apply the conormal sequence (note that we are able to work over an algebraically closed field so smoothness and regularity are the same). This means that the proof just presented works directly in equicharacteristic. This was pointed out to us by Linquan Ma.
\end{remark}

\begin{remark}\label{MaProof}
    Like was promised in Remark \ref{MaComment}, we present the necessary details to carry out the proof of this section without reducing to a normal surface. This proof was communicated to us by Linquan Ma.

    Letting $Z'$ be a resolution of $Z$, as $Z'$ is normal it will factor through the normalization $Z''$. The normalization of $A/I$ is local as $A/I$ is henselian, and the normalization is a domain, and the inverse image of $s\in Z$ and $s''\in Z''$ are set-theoretically equal, and thus define the same reduced divisor. This is enough to ensure that the intersection matrix obtained from $Z'_s$ is negative-definite as the one obtained from $Z'_{s''}$ is, and that $Z'_s$ is connected.
    
    The cited result \cite[Theorem V.3.1]{hartshorneAmpleSubvarietiesAlgebraic1970} is done for algebraically closed fields, which is why we still needed to make that reduction. Although one might be tempted to use \cite[Theorem 3.3]{hironakaFormalFunctionsFormal1968} which only requires the field to be infinite, this result will say that $Z'_s$ is universally $G_1$, which doesn't imply $G_1$ which is the condition that the map $H^0(E_1,\mathcal{O}_{E_1})\to H^0(E_1\cap Z'_s,\mathcal{O}_{(E_1)_{/E_1\cap Z'}})$ is an isomorphism. And indeed, as we have to factor through the normalization, if the normalization introduces a non-trivial residue extension, then $Z'_{s''}$ is ascheme over $s''$. Then $(E_1)_{/E_1\cap Z'}$ is a formal scheme over $s''$ and thus has global sections larger than $k$.

    It is, however, worth noting that for the proof of Theorem \ref{unramified} we do actually require the reduction to $A/I$ being normal.
\end{remark}

\section{A Unified Proof}
In \cite{hunekeVanishingLocalCohomology1990} a unified proof of the equicharacteristic versions of the SVT was given. As our proof utilizes the ramification in an essential way, the method cannot directly be applied in unramified mixed characteritic, although as noted in Remark \ref{equichar} it can work in equicharacteristic. However, we can show that the SVT holding for ramified rings implies the SVT for unramified regular local rings.

\begin{thm}\label{unramified}
    If SVT holds for ramified regular local rings, then it holds for unramified regular local rings.
\end{thm}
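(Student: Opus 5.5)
We reduce to a ramified ring by a ramified base change. By the reductions of \secref{reduction-to-surface}, \secref{algclosed} and \secref{normalsurface}, it suffices to treat the following situation: $A$ is a complete unramified regular local ring of mixed characteristic $(0,p)$ with algebraically closed residue field $k$, and $I=\mathfrak{p}$ is a prime with $A/\mathfrak{p}$ a normal two-dimensional domain. By the discussion at the end of \secref{reduction-to-surface}'s predecessor section, we must show that every formal function on $\mathfrak{U}=Z\setminus\{\mathfrak{m}\}$ lies in $A$.

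\textbf{Construction.} Write $A = W(k)\llbracket x_2,\dots,x_d\rrbracket$ and set $A' = A[T]/(T^2 - p)$. Since $T^2-p$ is Eisenstein over $W(k)$, the ring $A'=W(k)[\sqrt p]\llbracket x_2,\dots,x_d\rrbracket$ is a ramified complete regular local ring. The map $A\to A'$ is finite and free of rank $2$, hence faithfully flat, and has the same residue field. Set $I'=\mathfrak{p}A'$. Then $A'/I' = (A/\mathfrak{p})[T]/(T^2-p)$ is finite free over $A/\mathfrak{p}$. It therefore has dimension $2$, and its spectrum is homeomorphic to $\Spec A/\mathfrak{p}$, because $T^2-p$ has a single root modulo every prime containing $p$. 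We do not need this homeomorphism globally, only that $V(I')\setminus\{\mathfrak{m}'\}\to V(\mathfrak{p})\setminus\{\mathfrak{m}\}$ is finite and surjective with connected fibres over the closed locus.

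\textbf{Main obstacle.} We must show that $V(I')\setminus\{\mathfrak{m}'\}$ is connected, or at least that some $2$-dimensional component is. Normality is used here, following \secref{algclosed}. Let $B$ be the normalization of $(A/\mathfrak{p})[T]/(T^2-p)$, or of the minimal prime quotients if $p$ becomes a square in the fraction field of $A/\mathfrak{p}$. In that split case $A'/I'\cong A/\mathfrak{p}\times A/\mathfrak{p}$ away from $V(p)$, the two copies are glued along $V(p)$, and we choose one irreducible component $\mathfrak{p}'$ of $I'$. In the non-split case $A'/I'$ is a domain. Its normalization is local because it is henselian, and it has depth $2$, so Hartshorne's connectedness lemma shows that its punctured spectrum, and hence that of $A'/I'$, is connected. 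In either case we obtain a prime $\mathfrak{p}'$ of $A'$ lying over $\mathfrak{p}$ with $\dim A'/\mathfrak{p}'=2$ and connected punctured spectrum. By the ramified theorem \ref{maintheorem}, $\mathrm{SVT}(A',\mathfrak{p}')$ holds, so formal functions on $\mathfrak{U}'=V(\mathfrak{p}')\setminus\{\mathfrak{m}'\}$ are exactly $A'$.

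\textbf{Descent.} We use the map $\Gamma(\mathfrak{U},\mathcal{O}_{X_{/Z}})\to\Gamma(\mathfrak{U}',\mathcal{O}_{X'_{/Z'}})=A'$. It is injective by the stalk and completion diagram argument of Proposition \ref{normalization}, because $\widehat{A}_{\mathfrak{q}}\to\widehat{A'}_{\mathfrak{q}'}$ is injective for the finite flat map. It remains to check that the image lies in $A$. In the non-split case the nontrivial $A$-automorphism $\sigma\colon T\mapsto -T$ of $A'$ preserves $\mathfrak{p}'=\mathfrak{p}A'$. So, as in Proposition \ref{normalization}, $\sigma$ fixes the image of $\Gamma(\mathfrak{U},\mathcal{O}_{X_{/Z}})$, and $A'^{\sigma}=A$ since $2$ is invertible on the $T$-coefficient after inverting $p$ and $A'$ is $p$-torsion free. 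In the split case $A/\mathfrak{p}\cong A'/\mathfrak{p}'$, and the composite $A\to A'\to A'/\mathfrak{p}'$ together with the equality of completions (Lemma \ref{completions}) identifies the formal neighbourhoods directly. Hence a formal function $f$ on $\mathfrak{U}$ maps to some $a\in A$, and injectivity gives $f=a$.

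The key difficulty is the connectedness and primality bookkeeping for $\mathfrak{p}A'$. This is where normality of $A/\mathfrak{p}$, which Remark \ref{MaProof} says is needed here, enters.
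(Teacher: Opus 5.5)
Your base change $A'=A[T]/(T^2-p)$ is the paper's ring $B$ (no completion is needed, since $A'$ is already complete and local), and your non-split branch is sound. The gap is in the split branch. There you replace $\mathfrak{p}A'$ by one minimal prime $\mathfrak{p}'$ and claim that $A'/\mathfrak{p}'\cong A/\mathfrak{p}$ ``identifies the formal neighbourhoods directly''. It does not. An isomorphism $Z'\cong Z$ of the closed subschemes says nothing about the ambient completions, and here $X'\to X$ is ramified along $V(p)$.

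Concretely, suppose $p\notin\mathfrak{p}$ and pick $z\in Z\setminus\{\mathfrak{m}\}$ with $p\in z$; such points exist since $\dim Z\cap V(p)=1$. By Lemma \ref{formallocalcomplete}, the completed stalk of $\mathcal{O}_{X'_{/Z'}}$ at the unique point over $z$ is $\widehat{\mathcal{O}_{X,z}}[T]/(T^2-p)$. This is free of rank $2$ over $\widehat{\mathcal{O}_{X,z}}$, the completed stalk of $\mathcal{O}_{X_{/Z}}$, so the two are not equal. Globally, too, $\Gamma(X'_{/Z'},\mathcal{O}_{X'_{/Z'}})=A'\neq A=\Gamma(X_{/Z},\mathcal{O}_{X_{/Z}})$. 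So all you know is that a formal function $f$ on $\mathfrak{U}$ maps to some $a'\in A'$, and nothing forces $a'\in A$.

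Nor can $\sigma$ rescue this choice of component. Write $p=u^2$ with $u\in A/\mathfrak{p}$ (by normality) and let $\tilde u\in A$ be a lift. The two minimal primes are $\mathfrak{p}A'+(T\mp\tilde u)$, and $\sigma$ swaps them. When $p\in\mathfrak{p}$, which your definition also files under ``split'', the unique minimal prime $\mathfrak{p}A'+TA'$ is $\sigma$-stable, so your non-split argument would have worked there. Incidentally, $\Spec A'/I'\to\Spec A/\mathfrak{p}$ is not a homeomorphism in general: a point $z\notin V(p)$ has two preimages whenever $p$ is a square in $\kappa(z)$.

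The repair is not to pass to a prime at all, since Theorem \ref{maintheorem} is stated for arbitrary ideals. The ring $A'/\mathfrak{p}A'=(A/\mathfrak{p})[T]/(T^2-p)$ is local and free of rank $2$ over $A/\mathfrak{p}$, which has depth $2$ by normality. Hence it has depth $2$, and Hartshorne's connectedness lemma makes its punctured spectrum connected, with no case distinction or normalization. So $\mathrm{SVT}(A',\mathfrak{p}A')$ holds, and you can finish in either of two ways. Your $\sigma$-argument now goes through verbatim, since $\sigma$ preserves $\mathfrak{p}A'$ and $A'^{\sigma}=A$. Alternatively, as the paper does, skip formal functions entirely: $H^{n-1}_{\mathfrak{p}A'}(A')\cong H^{n-1}_{\mathfrak{p}}(A)\otimes_A A'=0$, and faithful flatness of $A\to A'$ gives $H^{n-1}_{\mathfrak{p}}(A)=0$. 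That is the whole of the paper's proof, and normality of $A/\mathfrak{p}$ is used there only to get depth $2$.
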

\begin{proof}
    Let $A$ be an unramified regular local ring whose residue field is characteristic $p > 0$, and $I$ an ideal such that $\dim A/I\geq 2$ and the punctured spec of $A^{\dagger}/IA^{\dagger}$ is connected. By \secref{reduction-to-surface}, \secref{algclosed}, and \secref{normalsurface} which did not use the ramified hypothesis, we may assume that $A$ is complete, has an algebraically closed residue field, and that $A/I$ is a normal surface. Note that this implies $A/I$ has depth 2.

    Consider the ring $B = \widehat{\left(\frac{A[x]}{x^2-p}\right)}_{(x,\mathfrak{m})}$, then $B$ is a faithfully flat ramified complete regular local $A$-algebra with an algebraically closed residue field. Then $B/IB$ is a faithfully flat local $A/I$ algebra, and thus has depth at least 2, and thus its punctured spec is connected. Theorem \ref{maintheorem} then implies that $H^{n-1}_{IB}(B) = H^{n-1}_I(A)\otimes B = 0$, and so $H^{n-1}_I(A) = 0$ and SVT$(A,I)$ holds.
\end{proof}

\begin{remark}
    Combined with Remark \ref{equichar} we see that the method of this paper can prove the SVT for all characteristics, yielding a unified proof.

    We were also able to adapt the method of proof of this paper to positive characteristic without assuming the global generation of the normal bundle by utilizing the result in ramified mixed characteristic which we demonstrate below, although this is unnecessary by the above consideration.
\end{remark}

\begin{thm}
    If SVT holds for ramified regular local rings, then it holds for positive characteristic regular local rings.
\end{thm}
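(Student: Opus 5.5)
The strategy is to deduce the positive characteristic case from the ramified one by realizing $A$ as a hypersurface section $R/yR$ of a ramified regular local ring $R$. We then transport the vanishing of $H^{\dim R-2}$ from $R$ down to $A$ via the long exact sequence for multiplication by $y$.

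\textbf{Reductions.} Let $A$ be a regular local ring of characteristic $p$ and $I$ an ideal with $\dim A/I\geq 2$ and connected punctured spectrum of $A^{\dagger}/IA^{\dagger}$. The reductions in \secref{reduction-to-surface}, \secref{algclosed} and \secref{normalsurface} used no characteristic hypothesis. So we may assume that $A$ is complete with algebraically closed residue field $k$ and that $A/I$ is a normal surface. Cohen's structure theorem then gives $A=k\llbracket x_1,\dots,x_n\rrbracket$ with $n=\dim A$, and we must show $H^{n-1}_I(A)=0$.

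\textbf{The lift.} Let $W=W(k)$ and set
\[R=W\llbracket y,x_1,\dots,x_n\rrbracket/(p-y^2).\]
This is a complete regular local ring of dimension $n+1$ with residue field $k$. It is ramified, since $p=y^2\in\mathfrak{m}_R^2$, and $R/yR\cong A$. Choose generators $f_1,\dots,f_t$ of $I$, lift them to $g_i\in R$, and put $J_0=(g_1,\dots,g_t)$. Since $J_0A=I$, we have $H^i_{J_0}(M)=H^i_I(M)$ for every $A$-module $M$. Because $y$ is a nonzerodivisor on $R$, the sequence $0\to R\xrightarrow{y}R\to A\to 0$ gives the exact sequence
\[H^{n-1}_{J_0}(R)\to H^{n-1}_I(A)\to H^{n}_{J_0}(R).\]
Now $\dim R-2=n-1$. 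So if SVT$(R,J_0)$ holds, both outer terms vanish and hence $H^{n-1}_I(A)=0$. By hypothesis SVT holds for the ramified ring $R$. Moreover $R$ is complete with algebraically closed residue field, so $R^{\dagger}=R$. It therefore remains to check that $\dim R/J_0\geq 2$ and that $\Spec(R/J_0)\setminus\{\mathfrak{m}_R\}$ is connected.

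\textbf{Main obstacle: connectedness of the lift.} I expect this step to be the hardest. Since $V(J_0)\cap V(y)=V(I)$ and the punctured spectrum of $A/I$ is connected (it has depth $2$), it suffices to show that every irreducible component of $V(J_0)$ through the closed point meets $V(I)\setminus\{\mathfrak{m}\}$.
\begin{enumerate}
\item A component of dimension $\geq 2$ does meet it: cutting by $y$ drops dimension by at most one, so the component meets $V(y)$ in a set of dimension $\geq 1$.
\item Components of dimension $\leq 1$ might be disconnected from the rest. The first fix I would try is to replace $J_0$ by the intersection $J_0'$ of its primary components of dimension $\geq 2$. This does not change $H^{\geq 2}_{J_0}$ up to the Mayer--Vietoris corrections coming from ideals of dimension $\leq 1$. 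Those corrections vanish in degrees $\geq n$, by Grothendieck vanishing together with Hartshorne--Lichtenbaum applied to $\dim \leq 1$ supports in the $(n+1)$-dimensional ring $R$.
\item We then need $V(J_0')\cap V(y)$ to still equal $V(I)$. This holds because a generic point of $V(I)$ lies on a component of $V(J_0)$ which contains it and has dimension $\geq 2$.
\end{enumerate}
If this fails, the fallback is to choose the lifts $g_i$ more carefully, so that $y$ is a nonzerodivisor on $R/J_0$. Then $R/J_0$ has depth $3$, and Hartshorne's connectedness lemma gives connectedness of the punctured spectrum directly. Arranging this amounts to a flat deformation of the normal surface $A/I$ over the ramified base.
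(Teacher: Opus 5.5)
There is a genuine gap, and it is not in the connectedness step. It is in the step you treated as routine. SVT$(R,J_0)$ asserts $\mathrm{cd}(R,J_0)\le \dim R-2=n-1$, i.e.\ $H^i_{J_0}(R)=0$ for $i\ge n$. It says nothing about $H^{n-1}_{J_0}(R)=H^{\dim R-2}_{J_0}(R)$. Your opening sentence, ``transport the vanishing of $H^{\dim R-2}$'', already has the degree off by one. Compare the proof of Theorem~\ref{unramified}, where SVT for the $n$-dimensional ring $B$ is used to kill $H^{n-1}_{IB}(B)$. So even granting SVT$(R,J_0)$, your long exact sequence only gives
\[H^{n-1}_I(A)\cong H^{n-1}_{J_0}(R)/y\,H^{n-1}_{J_0}(R),\]
which restates the problem rather than solving it. Conceptually, since $A$ is an $R$-module, any argument of this shape only yields $\mathrm{cd}(A,I)\le \mathrm{cd}(R,J_0)\le n-1=\dim A-1$, which is just Hartshorne--Lichtenbaum. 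Closing the gap would require $\mathrm{cd}(R,J_0)\le \dim R-3$. That is a ``third vanishing'' statement in ramified mixed characteristic, which SVT does not provide and which is not available even under your fallback $\mathrm{depth}\,R/J_0=3$. The dimension loss is unavoidable for a black-box reduction. A mixed characteristic regular local ring surjecting onto $A$ has dimension at least $n+1$. There is also no flat local map between $A$ and such a ring in either direction, so the flat base change of Theorem~\ref{unramified} is not an option. Separately, your step 2 is also wrong: if $V(K)$ has a one-dimensional component in the $(n+1)$-dimensional ring $R$, then $H^n_K(R)\neq 0$ (localize at its generic point). This point is moot, though.

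The paper avoids the problem by never applying the \emph{statement} of SVT to $R$; what it transfers is the ramified \emph{proof} of \secref{resolution}. After the reductions, $Z=\Spec A/I$ is a normal surface with a resolution $Z'\subset P=\BP^N_A\subset P'=\BP^N_R$. The only ramification-dependent input was the vanishing $H^0(Z'_s,(\mathcal{O}_P/J^n)\rest{Z'_s}(cZ'_s))=0$ in Proposition~\ref{lifting}, which came from Gabber's global generation of the normal bundle. The kernel of $R\to A$ lies in the maximal ideal $\mathfrak{n}$ of $R$, so the paper identifies $(\mathcal{O}_P/J^n)\rest{Z'_s}$ with $(\mathcal{O}_{P'}/(J')^n)\rest{Z'_s}$. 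Gabber's theorem applies to the latter, since $Z'\subset P'$ lives over the ramified base. The paper then runs the rest of \secref{resolution} (Proposition~\ref{thick-sections} and the extension of formal functions to $P$) over $A$ itself. The point is that this closed-fibre input is insensitive to cutting down by an element of $\mathfrak{n}$. Vanishing of local cohomology in a fixed degree relative to the dimension is not.
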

\begin{proof}
    Let $A$ be a characteristic $p > 0$ regular local ring and $I$ an ideal such that $\dim A/I\geq 2$ and the punctured spec of $A^{\dagger}/IA^{\dagger}$ is connected. By \secref{reduction-to-surface}, \secref{algclosed}, and \secref{normalsurface} which did not use the ramified hypothesis, we may assume that $A$ is complete, has an algebraically closed residue field, and that $A/I$ is a normal surface.

    As $A$ is complete it is of the form $k\llbracket X_1,\dots, X_n\rrbracket$. Let $R = \widehat{\left(\frac{W(k)[x]}{x^2-p}\right)}_{(x,\mathfrak{m})}$, and then we have a surjection $R\to A$, let $I$ denote the kernel of this surjection. Note that $R$ is a ramified complete regular local ring with algebraically closed residue field. Let $\mathfrak{n}$ denote the maximal ideal of $R$.

    We will proceed as in \secref{resolution} letting $Z = \Spec A/I$, $Z'\to Z$ a resolution factoring through the blowup of $Z$ at the closed point $s$, and $P = \BP^N_A$ a projective space which $Z'$ embeds into. Let $J$ be the ideal sheaf of $Z'$ in $P$. Observe that in the proof of Proposition \ref{lifting} it was noted that it sufficed to show $H^0(Z'_s,(\mathcal{O}_P/J^n)\rest{Z'_s}(cZ'_s)) = 0$ for all $c > 0$. Let $P' = \BP^N_R$, and note that $P$ embeds into $P'$. Let the ideal sheaf of $Z'$ in $P'$ be denoted by $J'$.

    Note that $\mathcal{O}_P = \mathcal{O}_{P'}/I\mathcal{O}_{P'}$. We have that $J = (J' + I\mathcal{O}_{P'})/I\mathcal{O}_{P'}$ from which it follows that $J^n = ((J')^n + I\mathcal{O}_{P'})/I\mathcal{O}_{P'}$. Now definitionally:
    \[\mathcal{O}_{P'}/(J')^n\rest{Z'_s} = \frac{\mathcal{O}_{P'}}{(J')^n + \mathfrak{n}\mathcal{O}_{P'}}\]
    while
    \[\mathcal{O}_P/J^n\rest{Z'_s} = \frac{\mathcal{O}_{P}}{J^n + \mathfrak{m}\mathcal{O}_{P}}.\]
    Substituting in our other description for $J^n$ we see that
    \[\mathcal{O}_P/J^n\rest{Z'_s} = \frac{\mathcal{O}_{P'}/I\mathcal{O}_{P'}}{((J')^n + I\mathcal{O}_{P'} + \mathfrak{n}\mathcal{O}_{P'})/I\mathcal{O})_{P'}} = \frac{\mathcal{O}_{P'}}{((J')^n + I\mathcal{O}_{P'} + \mathfrak{n}\mathcal{O}_{P'}}.\]
    However $I\subseteq \mathfrak{n}$ and so this reduces to $\frac{\mathcal{O}_{P'}}{((J')^n + \mathfrak{n}\mathcal{O}_{P'}} = \mathcal{O}_{P'}/(J')^n\rest{Z'_s}$. Thus, $H^0(Z'_s,\mathcal{O}_{P_{/Z'}}) = H^0(Z'_s,\mathcal{O}_{P'_{/Z'}}) = 0$ by our proof in the ramified case.

    The rest of the proof of \secref{resolution} can then be carried out for $P$ and $Z'$ like normal to conclude that SVT$(A,I)$ holds.
\end{proof}

\section{Applications}
\bmhead{\large A Relation to Depth}
Suppose $A$ is a regular local ring, and $I$ an ideal such that $\mathrm{depth}(A/I)\geq c$. If $c = 2$, then we see that $A^{\dagger}/IA^{\dagger}$ also has depth at least 2, implying that its punctured spec is connected by Hartshorne's connectedness lemma. Clearly it is also at least 2-dimensional, and so by the SVT we see that $\mathrm{cd}(A,I)\leq \dim A - 2$. We can ask if for general $c$ we have $\mathrm{cd}(A,I)\leq \dim A - c$, and we will now make some observations.

In equicharacteristic $p$, by the results of Peskine-Szpiro \cite{peskineDimensionProjectiveFinie1973} we see that the result holds for all $c$.

For $c = 1$, Hartshorne-Lichtenbaum vanishing will prove the result. Indeed, $\widehat{A}$ is a regular local ring, and so the only minimal prime is $(0)$. Then, assuming that $\mathrm{depth}(A/I)\geq 1$ then clearly $\mathrm{depth}(\hat{A}/I\hat{A})\geq 1$ and it is also clearly at least 1-dimensional.

If $c = 3$, then it is known for rings essentially of finite type over fields \cite[Corollary 2.8]{daoRelationshipDepthCohomological2016}.

For $c = 4$ counterexamples are known in characteristic 0, like when $I$ is defined by the segre embedding of two projective spaces.
\bmhead{\large Connected Components}
\begin{thm}
    \emph{(}c.f \cite[Theorem 3.12]{hernandezCohomologicalDimensionLyubeznik2018}\emph{)} Let $A$ be an $n$-dimensional regular local ring, with separably closed residue field $k$. Let $I$ be an ideal of $A$ such that $\dim A/I\geq 2$. Let the punctured spectrum of $A/I$ have $t$ connected components, then $H^{n-1}_I(A) = E(k)^{\oplus t-1}$. Thus, $t = \dim_k\Hom(k,H^{n-1}_I(A)) + 1$.
\end{thm}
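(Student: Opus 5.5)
We can take $A$ complete with separably closed residue field $k$, and moreover assume the SVT in all characteristics: the ramified case is Theorem \ref{maintheorem}, the unramified case is Theorem \ref{unramified}, and the equicharacteristic case is Remark \ref{equichar}. Completion is faithfully flat and does not change $k$, and $H^{n-1}_{I}(A)$ is supported at $\mathfrak{m}$ (see below), so it is unchanged by completion. The number $t$ of connected components of the punctured spectrum is also preserved: the components of $\Spec(A/I)\setminus\{\mathfrak{m}\}$ correspond to those of the formal punctured spectrum over the completion, because $A/I\to \widehat{A}/I\widehat{A}$ induces a homeomorphism on closed fibres. Since $k$ is separably closed the strict henselization adds nothing, and $A^\dagger$ differs from $\widehat{A}$ only in this way.

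\textbf{Splitting $I$ by components.} Let $\Spec(A/I)\setminus\{\mathfrak{m}\}=W_1\sqcup\dots\sqcup W_t$ with $W_j$ connected. Put $V(I_j)=\overline{W_j}$, so that $\sqrt{I}=\sqrt{I_1\cap\dots\cap I_t}$ and $\sqrt{I_i+I_j}=\mathfrak{m}$ for $i\neq j$. First I would show that we may take every component to have dimension at least $2$. By Hartshorne--Lichtenbaum, a component of dimension $\le 1$ has $\mathrm{cd}\le n-1$, and $\mathrm{cd}\le n-2$ holds as well once its contribution is absorbed. Rather than treat this separately, I would handle it uniformly inside the Mayer--Vietoris argument. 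Since the SVT condition fails only for the wrong dimension, I would assume $\dim A/I_j\geq 2$ for each $j$ and treat components of dimension $\le1$ by the same recursion.

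\textbf{Mayer--Vietoris recursion.} Set $K=I_1\cap\dots\cap I_{t-1}$. Because $V(K+I_t)=\{\mathfrak{m}\}$, the Mayer--Vietoris sequence reads
\[H^{n-1}_{K+I_t}(A)\to H^{n-1}_K(A)\oplus H^{n-1}_{I_t}(A)\to H^{n-1}_I(A)\to H^n_{K+I_t}(A)\to H^n_K(A)\oplus H^n_{I_t}(A).\]
Here $H^{i}_{K+I_t}=H^i_\mathfrak{m}(A)$, which vanishes for $i=n-1$ and equals $E(k)$ for $i=n$. The module $H^n_{I_t}(A)$ vanishes by Hartshorne--Lichtenbaum, since $A$ is a complete domain and $\dim A/I_t\ge1$; likewise $H^n_K(A)=0$. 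The SVT applies to $I_t$, whose punctured spectrum is connected and has dimension at least $2$, giving $H^{n-1}_{I_t}(A)=0$. The result is a short exact sequence
\[0\to H^{n-1}_K(A)\to H^{n-1}_I(A)\to E(k)\to 0,\]
and induction on $t$ gives $H^{n-1}_K(A)\cong E(k)^{\oplus t-2}$, with base case $t=1$ given by the SVT. Because $E(k)$ is injective the sequence splits, so $H^{n-1}_I(A)\cong E(k)^{\oplus t-1}$. The final formula follows from $\Hom(k,E(k))=k$.

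The main obstacle is the dimension-one case. If some $W_j$ has dimension $1$, then $H^{n-1}_{I_j}(A)$ need not vanish; for a curve it equals $H^{n-1}_{I_j}(A)\neq0$ by Hartshorne--Lichtenbaum's converse. In that situation the induction must instead be organized on components whose closure has dimension at least $2$. My first attempt would be to check whether the claimed formula requires every component to be of dimension $\geq2$ (the cited reference presumably assumes this, for instance equidimensionality). If so, I would add that hypothesis explicitly, noting that $\dim A/I\ge2$ is otherwise insufficient.
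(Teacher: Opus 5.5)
Where the statement is actually true, your Mayer--Vietoris induction is exactly the paper's proof. Your worry about one-dimensional components is justified. The paper's proof has the same hole: it applies the SVT to $I_t$ knowing only $\dim A/I_t\geq 1$. The statement is false as written. Take $A=k\llbracket x,y,z,w\rrbracket$ with $k$ separably closed, and $I=(x,y)\cap(x,z,w)$.
\begin{itemize}
\item Then $\dim A/I=2$ and $t=2$.
\item Mayer--Vietoris gives $0\to H^3_{(x,z,w)}(A)\to H^3_I(A)\to E(k)\to 0$.
\item $H^3_{(x,z,w)}(A)$ is nonzero and has the prime $(x,z,w)$ in its support. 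The reason is that $\operatorname{ht}(x,z,w)=3=n-1$, not a converse of Hartshorne--Lichtenbaum.
\item Hence $H^3_I(A)\not\cong E(k)$.
\end{itemize}
So your middle paragraph's idea of absorbing such components ``uniformly'' cannot work and should be deleted. The correct extra hypothesis is $\dim A/\mathfrak p\geq 2$ for every minimal prime $\mathfrak p$ of $I$. A minimal prime $\mathfrak q$ with $\dim A/\mathfrak q=1$ is always an isolated point of the punctured spectrum. So this hypothesis is equivalent to every component having closure of dimension at least $2$. It passes to $K$ and $I_t$, so the induction runs.

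The genuine error in your own argument is the reduction to the complete case. A homeomorphism of closed fibres says nothing about punctured spectra, and $t$ can increase under completion. Let $A/I$ be the local ring at the glued point of $\mathbb{A}^2$ with two points identified transversally, presented as a quotient of a localized polynomial ring $A$.
\begin{itemize}
\item $A/I$ is a two-dimensional domain, so $t=1$.
\item But $\widehat A/I\widehat A\cong k\llbracket x,y,z,w\rrbracket/((x,y)\cap(z,w))$ has disconnected punctured spectrum.
\item Therefore $H^{n-1}_I(A)\otimes_A\widehat A=H^{n-1}_{I\widehat A}(\widehat A)\neq 0$, contradicting the formula.
\end{itemize}
This is exactly why the SVT is phrased via $A^\dagger$. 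So one must either assume $A$ complete, or let $t$ count the components of the punctured spectrum of $\widehat A/I\widehat A=A^\dagger/IA^\dagger$ (these agree since $k$ is separably closed). The paper does this tacitly, because its base case invokes the SVT, whose hypothesis concerns $A^\dagger$.

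With $t$ so defined, your descent works, but not via the circular ``supported at $\mathfrak m$ (see below)''. Instead, set $M=H^{n-1}_I(A)$. You have $M\otimes_A\widehat A\cong E(k)^{\oplus t-1}$, and $M\to M\otimes_A\widehat A$ is injective. Hence $M$ is $\mathfrak m$-torsion, and therefore $M\cong M\otimes_A\widehat A$.
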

\begin{proof}
    We will fix ideals $I_1,\dots, I_t$ such that the complements of $V(I_i)\setminus\{\mathfrak{m}\}$ are the connected components of the punctured spec of $A/I$.
    
    We will induct on $t$. If $t = 1$ then by the SVT we see that $H^{n-1}_I(A) = 0$ as desired. Now assume that for $t > 1$ it is known that for any ideal $J$ such that the punctured spectrum of $A/J$ has $r < t$ connected components, we have $H^{n-1}_J(A) = E(k)^{\oplus r-1}$. In particular, this will hold for $J = I_1\cap\dots\cap I_{t-1}$. The Mayer-Vietoris sequence for local cohomolgy gives us an exact sequence
    \[H^{n-1}_{J + I_t}(A)\to H^{n-1}_J(A)\oplus H^{n-1}_{I_t}(A)\to H^{n-1}_I(A)\to H^n_{J+ I_t}(A)\to H^n_J(A)\oplus H^n_{I_t}(A).\]
    As none of $I_1 + \dots + \hat{I_k} + \dots + I_t$ are $\mathfrak{m}$-primary, this implies that $A/I_t$ and $A/J$ are both at least 1-dimensional. Thus, by Hartshorne-Lichtenbaum vanishing, the last term is 0. By SVT, $H^{n-1}_{I_t}(A) = 0$, and $J + I_t$ is $\mathfrak{m}$-primary, and since $A$ is a regular local ring this means $H^{n-1}_{J + I_t}(A) = 0$ and $H^n_{J + I_t}(A) = E(k)$. Thus, this exact sequences becomes
    \[0\to E(k)^{\oplus t-2}\to H^{n-1}_I(A)\to E(k)\to 0,\]
    and since $E(k)^{\oplus t-2}$ is injective this splits giving us that $H^{n-1}_I(A) = E(k)^{\oplus t-1}$.
\end{proof}
\begin{remark}
    The proof given above is essentially the same proof as in \cite[Theorem 3.12]{hernandezCohomologicalDimensionLyubeznik2018}. There, for the unramified case they relate this number to the mixed characteristic Lyubeznik numbers, which we do not choose to do so that we can make a statement ambivalent to characteristic. It is worth noting that Theorem 3.10 and 3.11 of their paper would appear to work in our ramified case as well, as they state that their Theorem 3.9, attributed to Faltings as \cite[Korollar 2]{faltingsUberLokaleKohomologiegruppen1980}, works for all regular local rings. However, an issue is brought up in \cite[Remark 3.8]{bataviaVanishingLocalCohomology2026}, where they state that in communications with the authors that Theorem 3.9 in the mixed characteristic case was based on a misinterpretation of Faltings work, and does not appear anywhere in the literature. Batavia essentially fixes this in the case that $A$ is unramified by proving the missing result \cite[Theorem 3.5]{bataviaVanishingLocalCohomology2026}, but the proof is complex and crucially uses that $A$ is unramified, both to write it down as a power series ring over a Cohen ring, and also to ensure that after modding out by $p$ you still have a regular local ring. To the author's knowledge, the crucial result \cite[Korollar 2]{faltingsUberLokaleKohomologiegruppen1980} on which Theorem 3.10 and 3.11 of \cite{hernandezCohomologicalDimensionLyubeznik2018} depends on is still unproven in the ramified case, and so we are unable to adapt these results to the ramified case.
\end{remark}

%%===========================================================================================%%
%% If you are submitting to one of the Nature Portfolio journals, using the eJP submission   %%
%% system, please include the references within the manuscript file itself. You may do this  %%
%% by copying the reference list from your .bbl file, paste it into the main manuscript .tex %%
%% file, and delete the associated \verb+\bibliography+ commands.                            %%
%%===========================================================================================%%

\bibliography{bibliography}% common bib file

%% BioMed_Central_Bib_Style_v1.01

\begin{thebibliography}{19}
% BibTex style file: bmc-mathphys.bst (version 2.1), 2014-07-24
\ifx \bisbn   \undefined \def \bisbn  #1{ISBN #1}\fi
\ifx \binits  \undefined \def \binits#1{#1}\fi
\ifx \bauthor  \undefined \def \bauthor#1{#1}\fi
\ifx \batitle  \undefined \def \batitle#1{#1}\fi
\ifx \bjtitle  \undefined \def \bjtitle#1{#1}\fi
\ifx \bvolume  \undefined \def \bvolume#1{\textbf{#1}}\fi
\ifx \byear  \undefined \def \byear#1{#1}\fi
\ifx \bissue  \undefined \def \bissue#1{#1}\fi
\ifx \bfpage  \undefined \def \bfpage#1{#1}\fi
\ifx \blpage  \undefined \def \blpage #1{#1}\fi
\ifx \burl  \undefined \def \burl#1{\textsf{#1}}\fi
\ifx \doiurl  \undefined \def \doiurl#1{\url{https://doi.org/#1}}\fi
\ifx \betal  \undefined \def \betal{\textit{et al.}}\fi
\ifx \binstitute  \undefined \def \binstitute#1{#1}\fi
\ifx \binstitutionaled  \undefined \def \binstitutionaled#1{#1}\fi
\ifx \bctitle  \undefined \def \bctitle#1{#1}\fi
\ifx \beditor  \undefined \def \beditor#1{#1}\fi
\ifx \bpublisher  \undefined \def \bpublisher#1{#1}\fi
\ifx \bbtitle  \undefined \def \bbtitle#1{#1}\fi
\ifx \bedition  \undefined \def \bedition#1{#1}\fi
\ifx \bseriesno  \undefined \def \bseriesno#1{#1}\fi
\ifx \blocation  \undefined \def \blocation#1{#1}\fi
\ifx \bsertitle  \undefined \def \bsertitle#1{#1}\fi
\ifx \bsnm \undefined \def \bsnm#1{#1}\fi
\ifx \bsuffix \undefined \def \bsuffix#1{#1}\fi
\ifx \bparticle \undefined \def \bparticle#1{#1}\fi
\ifx \barticle \undefined \def \barticle#1{#1}\fi
\bibcommenthead
\ifx \bconfdate \undefined \def \bconfdate #1{#1}\fi
\ifx \botherref \undefined \def \botherref #1{#1}\fi
\ifx \url \undefined \def \url#1{\textsf{#1}}\fi
\ifx \bchapter \undefined \def \bchapter#1{#1}\fi
\ifx \bbook \undefined \def \bbook#1{#1}\fi
\ifx \bcomment \undefined \def \bcomment#1{#1}\fi
\ifx \oauthor \undefined \def \oauthor#1{#1}\fi
\ifx \citeauthoryear \undefined \def \citeauthoryear#1{#1}\fi
\ifx \endbibitem  \undefined \def \endbibitem {}\fi
\ifx \bconflocation  \undefined \def \bconflocation#1{#1}\fi
\ifx \arxivurl  \undefined \def \arxivurl#1{\textsf{#1}}\fi
\csname PreBibitemsHook\endcsname

%%% 1
\bibitem[\protect\citeauthoryear{Lipman}{1969}]{lipmanRationalSingularitiesApplications1969}
\begin{botherref}
\oauthor{\bsnm{Lipman}, \binits{J.}}:
Rational singularities, with applications to algebraic surfaces and unique factorization.
Institut des Hautes \'Etudes Scientifiques. Publications Math\'ematiques
(36),
195--279
(1969)
\end{botherref}
\endbibitem

%%% 2
\bibitem[\protect\citeauthoryear{Hartshorne}{1968}]{hartshorneCohomologicalDimensionAlgebraic1968}
\begin{barticle}
\bauthor{\bsnm{Hartshorne}, \binits{R.}}:
\batitle{Cohomological dimension of algebraic varieties}.
\bjtitle{Annals of Mathematics. Second Series}
\bvolume{88},
\bfpage{403}--\blpage{450}
(\byear{1968})
\doiurl{10.2307/1970720}
\end{barticle}
\endbibitem

%%% 3
\bibitem[\protect\citeauthoryear{Ogus}{1973}]{ogusLocalCohomologicalDimension1973}
\begin{barticle}
\bauthor{\bsnm{Ogus}, \binits{A.}}:
\batitle{Local cohomological dimension of algebraic varieties}.
\bjtitle{Annals of Mathematics. Second Series}
\bvolume{98},
\bfpage{327}--\blpage{365}
(\byear{1973})
\doiurl{10.2307/1970785}
\end{barticle}
\endbibitem

%%% 4
\bibitem[\protect\citeauthoryear{Peskine and Szpiro}{1973}]{peskineDimensionProjectiveFinie1973}
\begin{botherref}
\oauthor{\bsnm{Peskine}, \binits{C.}},
\oauthor{\bsnm{Szpiro}, \binits{L.}}:
Dimension projective finie et cohomologie locale. {{Applications}} \`a la d\'emonstration de conjectures de {{M}}. {{Auslander}}, {{H}}. {{Bass}} et {{A}}. {{Grothendieck}}.
Institut des Hautes \'Etudes Scientifiques. Publications Math\'ematiques
(42),
47--119
(1973)
\end{botherref}
\endbibitem

%%% 5
\bibitem[\protect\citeauthoryear{Huneke and Lyubeznik}{1990}]{hunekeVanishingLocalCohomology1990}
\begin{barticle}
\bauthor{\bsnm{Huneke}, \binits{C.}},
\bauthor{\bsnm{Lyubeznik}, \binits{G.}}:
\batitle{On the vanishing of local cohomology modules}.
\bjtitle{Inventiones Mathematicae}
\bvolume{102}(\bissue{1}),
\bfpage{73}--\blpage{93}
(\byear{1990})
\doiurl{10.1007/BF01233420}
\end{barticle}
\endbibitem

%%% 6
\bibitem[\protect\citeauthoryear{Zhang}{2025}]{zhangSecondVanishingTheorem2025}
\begin{barticle}
\bauthor{\bsnm{Zhang}, \binits{W.}}:
\batitle{The second vanishing theorem for local cohomology modules}.
\bjtitle{Mathematical Research Letters}
\bvolume{32}(\bissue{6}),
\bfpage{2039}--\blpage{2062}
(\byear{2025})
\doiurl{10.4310/mrl.260123201714}
\end{barticle}
\endbibitem

%%% 7
\bibitem[\protect\citeauthoryear{Bhattacharyya}{2020}]{bhattacharyyaNoteSecondVanishing2020}
\begin{botherref}
\oauthor{\bsnm{Bhattacharyya}, \binits{R.}}:
A Note on the Second Vanishing Theorem.
arXiv
(2020).
\doiurl{10.48550/arXiv.2004.02075}
\end{botherref}
\endbibitem

%%% 8
\bibitem[\protect\citeauthoryear{Asgharzadeh et~al.}{2023}]{asgharzadehSurjectivityLocalCohomology2023}
\begin{barticle}
\bauthor{\bsnm{Asgharzadeh}, \binits{M.}},
\bauthor{\bsnm{Ishiro}, \binits{S.}},
\bauthor{\bsnm{Shimomoto}, \binits{K.}}:
\batitle{Surjectivity of some local cohomology map and the second vanishing theorem}.
\bjtitle{Proceedings of the American Mathematical Society}
\bvolume{151}(\bissue{7}),
\bfpage{2847}--\blpage{2862}
(\byear{2023})
\doiurl{10.1090/proc/16340}
{\href{https://arxiv.org/abs/2107.09041}{{arXiv:2107.09041}}}
{[math.AC]}
\end{barticle}
\endbibitem

%%% 9
\bibitem[\protect\citeauthoryear{Ma}{2026}]{maInfinitelyManyAssociated2026}
\begin{botherref}
\oauthor{\bsnm{Ma}, \binits{L.}}:
Infinitely Many Associated Primes of Local Cohomology Modules of Ramified Regular Local Rings.
arXiv
(2026).
\doiurl{10.48550/arXiv.2604.10964}
\end{botherref}
\endbibitem

%%% 10
\bibitem[\protect\citeauthoryear{Batavia}{2026}]{bataviaVanishingLocalCohomology2026}
\begin{botherref}
\oauthor{\bsnm{Batavia}, \binits{M.}}:
Vanishing of Local Cohomology in Unramified Mixed Characteristic.
arXiv
(2026).
\doiurl{10.48550/arXiv.2602.22191}
\end{botherref}
\endbibitem

%%% 11
\bibitem[\protect\citeauthoryear{Grothendieck}{1960}]{grothendieckElementsGeometrieAlgebrique1960}
\begin{botherref}
\oauthor{\bsnm{Grothendieck}, \binits{A.}}:
\'el\'ements de g\'eom\'etrie alg\'ebrique. {{I}}. {{Le}} langage des sch\'emas.
Institut des Hautes \'Etudes Scientifiques. Publications Math\'ematiques
(4),
228
(1960)
\end{botherref}
\endbibitem

%%% 12
\bibitem[\protect\citeauthoryear{Stacks Project~Authors}{2026}]{stacks-project}
\begin{botherref}
\oauthor{\bsnm{Stacks Project~Authors}, \binits{T.}}:
The {{Stacks Project}}.
https://stacks.math.columbia.edu
(2026)
\end{botherref}
\endbibitem

%%% 13
\bibitem[\protect\citeauthoryear{Berthelot}{1997}]{berthelotAlterationsVarietesAlgebriques1997}
\begin{bchapter}
\bauthor{\bsnm{Berthelot}, \binits{P.}}:
\bctitle{Alt\'erations de vari\'et\'es alg\'ebriques (d'apr\`es {{A}}. {{J}}. de {{Jong}})}.
In: \bbtitle{S\'eminaire {{Bourbaki}} : Volume 1995/96, Expos\'es 805-819}.
\bsertitle{Ast\'erisque},
pp. \bfpage{273}--\blpage{311}
(\byear{1997})
\end{bchapter}
\endbibitem

%%% 14
\bibitem[\protect\citeauthoryear{Goodman and Hartshorne}{1969}]{goodmanSchemesFinitedimensionalCohomology1969}
\begin{barticle}
\bauthor{\bsnm{Goodman}, \binits{J.}},
\bauthor{\bsnm{Hartshorne}, \binits{R.}}:
\batitle{Schemes with finite-dimensional cohomology groups}.
\bjtitle{American Journal of Mathematics}
\bvolume{91},
\bfpage{258}--\blpage{266}
(\byear{1969})
\doiurl{10.2307/2373281}
\end{barticle}
\endbibitem

%%% 15
\bibitem[\protect\citeauthoryear{Hartshorne}{1970}]{hartshorneAmpleSubvarietiesAlgebraic1970}
\begin{bbook}
\bauthor{\bsnm{Hartshorne}, \binits{R.}}:
\bbtitle{Ample Subvarieties of Algebraic Varieties}.
\bsertitle{Lecture {{Notes}} in {{Mathematics}}},
vol. \bseriesno{Vol. 156}.
\bpublisher{Springer},
\blocation{Berlin-New York}
(\byear{1970})
\end{bbook}
\endbibitem

%%% 16
\bibitem[\protect\citeauthoryear{Hironaka and Matsumura}{1968}]{hironakaFormalFunctionsFormal1968}
\begin{barticle}
\bauthor{\bsnm{Hironaka}, \binits{H.}},
\bauthor{\bsnm{Matsumura}, \binits{H.}}:
\batitle{Formal functions and formal embeddings}.
\bjtitle{Journal of the Mathematical Society of Japan}
\bvolume{20},
\bfpage{52}--\blpage{82}
(\byear{1968})
\doiurl{10.2969/jmsj/02010052}
\end{barticle}
\endbibitem

%%% 17
\bibitem[\protect\citeauthoryear{Dao and Takagi}{2016}]{daoRelationshipDepthCohomological2016}
\begin{barticle}
\bauthor{\bsnm{Dao}, \binits{H.}},
\bauthor{\bsnm{Takagi}, \binits{S.}}:
\batitle{On the relationship between depth and cohomological dimension}.
\bjtitle{Compositio Mathematica}
\bvolume{152}(\bissue{4}),
\bfpage{876}--\blpage{888}
(\byear{2016})
\doiurl{10.1112/S0010437X15007678}
\end{barticle}
\endbibitem

%%% 18
\bibitem[\protect\citeauthoryear{Hern{\'a}ndez et~al.}{2018}]{hernandezCohomologicalDimensionLyubeznik2018}
\begin{barticle}
\bauthor{\bsnm{Hern{\'a}ndez}, \binits{D.J.}},
\bauthor{\bsnm{{N{\'u}{\~n}ez-Betancourt}}, \binits{L.}},
\bauthor{\bsnm{P{\'e}rez}, \binits{F.}},
\bauthor{\bsnm{Witt}, \binits{E.E.}}:
\batitle{Cohomological dimension, {{Lyubeznik}} numbers, and connectedness in mixed characteristic}.
\bjtitle{Journal of Algebra}
\bvolume{514},
\bfpage{442}--\blpage{467}
(\byear{2018})
\doiurl{10.1016/j.jalgebra.2018.07.019}
\end{barticle}
\endbibitem

%%% 19
\bibitem[\protect\citeauthoryear{Faltings}{1980}]{faltingsUberLokaleKohomologiegruppen1980}
\begin{barticle}
\bauthor{\bsnm{Faltings}, \binits{G.}}:
\batitle{\"uber lokale {{Kohomologiegruppen}} hoher {{Ordnung}}}.
\bjtitle{Journal f\"ur die Reine und Angewandte Mathematik. [Crelle's Journal]}
\bvolume{313},
\bfpage{43}--\blpage{51}
(\byear{1980})
\doiurl{10.1515/crll.1980.313.43}
\end{barticle}
\endbibitem

\end{thebibliography}
%% if required, the content of .bbl file can be included here once bbl is generated
%%\input sn-article.bbl

\end{document}